\DeclareMathAlphabet{\mathpzc}{OT1}{pzc}{m}{it}
\newcolumntype{L}{>{$}l<{$}}
\crefname{hypothesis}{Hypothesis}{Hypotheses}
\newtheorem{remark}{Remark}
\title{Simplified Airy function Asymptotic expansions for Reverse Generalised Bessel Polynomials}
\author{T. M. Dunster\thanks{Department of Mathematics and Statistics, San Diego State University, 5500 Campanile Drive, San Diego, CA 92182-7720, USA. 
  (\email{mdunster@sdsu.edu}, \url{https://tmdunster.sdsu.edu}).}
  }
\newcommand*{\addFileDependency}[1]{% argument=file name and extension
  \typeout{(#1)}% latexmk will find this if $recorder=0 (however, in that case, it will ignore #1 if it is a .aux or .pdf file etc and it exists! if it doesn't exist, it will appear in the list of dependents regardless)
  \@addtofilelist{#1}% if you want it to appear in \listfiles, not really necessary and latexmk doesn't use this
  \IfFileExists{#1}{}{\typeout{No file #1.}}% latexmk will find this message if #1 doesn't exist (yet)
}
\begin{document}

\maketitle

\begin{abstract}
Uniform asymptotic expansions are derived for reverse generalised Bessel polynomials of large degree $n$, real parameter $a$, and complex argument $z$, which are simpler than previously known results. The defining differential equation is analysed; for large $n$ and $\frac{3}{2} - n < a < \infty$, it possesses two turning points in the complex $z$ plane which are complex conjugates. Away from these turning points Liouville–Green expansions are obtained for the polynomials and two companion solutions of the differential equation, where asymptotic series appear in the exponent. Then representations involving Airy functions and two slowly varying coefficient functions are constructed. Using the Liouville–Green representations, asymptotic expansions are obtained for the coefficient functions that involve coefficients that can be easily and explicitly computed recursively. In conjunction with a suitable re-expansion, or Cauchy's integral formula, near the turning point, the expansions are valid for $-\Delta_{1} n+\frac{3}{2} \leq a \leq \Delta_{2} n$ for fixed arbitrary $\Delta_{1} \in (0,1)$ and bounded positive $\Delta_{2}$, uniformly for all unbounded complex values of $z$.
\end{abstract}

\begin{keywords}
{Asymptotic expansions, turning point theory, WKB theory, Bessel polynomials}
\end{keywords}

\begin{AMS}
34E05, 33C10, 34M60, 34E20
\end{AMS}

\section{Introduction}
\label{sec:Introduction}

The generalised Bessel polynomials are defined by
%%%%%%%%%%%
\begin{equation} 
\label{eq01}
y_{n}(z;a)=\sum_{k=0}^{n}\binom{n}{k}(n+a-1)_{k}\left(\tfrac12 z\right)^{k},
\end{equation}
%%%%%%%%%%%
where $(\alpha)_{k}=\Gamma(\alpha+k)/\Gamma(\alpha)$ is Pochhammer’s symbol. These were first introduced by Bochner and Romanovsky \cite{Bochner:1929:SLP,Romanovsky:1929:NCP}. Their importance became more widely recognised following the studies of Krall and Frink \cite{Krall:1949:NCO}, who established their connection to solutions of the spherical wave equation, and of Thompson \cite{Thompson:1949:DNF}, who employed them in the analyses of electrical delay networks. 

In this paper we focus on the reverse generalised Bessel polynomials which are given by
%%%%%%%%%%%
\begin{equation} 
\label{eq02}
\theta_{n}(z;a)=z^{n}y_{n}(z^{-1};a).
\end{equation}
%%%%%%%%%%%
We consider the asymptotic behaviour of $\theta_{n}(z;a)$ for large degree $n$ and complex argument $z$. Results in this direction had remained underdeveloped for some time, chiefly due to the analytical difficulties introduced by turning points in the associated differential equations, which, for $a>\frac32-n$ and $a \neq 2$, are neither purely real nor imaginary. Complex turning points complicated both integral and differential equation methods of asymptotic analysis. For the exceptional case when $a = 2$ the turning points become purely imaginary, the polynomials can be expressed in terms of modified Bessel functions \cite[Eqs. (1.4) and (1.12)]{Dunster:2001:GBP}, and classical asymptotic results can be used (cf. \cite[Chap.~11, Sect.~10]{Olver:1997:ASF}).

Among the more recent investigations, Wong and Zhang \cite{Wong:1997:AEG} provided Airy-type asymptotic expansions for $y_n(z; a)$ based on an integral representation and the method of Chester, Friedman, and Ursell \cite{Chester:1957:EMS}. More recently, the present author \cite{Dunster:2001:GBP} provided asymptotic approximations for $\theta_{n}(z;a)$ valid over wider domains in the complex $z$ plane—specifically for regions including both the origin and infinity—as well as for a broader range of the parameter $a$. Additionally, the expansions obtained included explicit error bounds. This was achieved by an application of two general asymptotic theories, due to F. W. J. Olver \cite[Chaps. 10 and 11]{Olver:1997:ASF}, to an ordinary differential equation satisfied by $\theta_{n}(z;a)$: one for the case of complex domains which are free of turning points, yielding Liouville–Green (LG) expansions, and the other for complex domains containing a simple turning point (yielding Airy function expansions). The approximations are uniformly valid for $z$ lying in certain unbounded subdomains of the complex plane and include explicit error bounds. Together, the domains of validity cover the whole complex $z$ plane.

The drawback in using Olver's turning point expansion is that the coefficients satisfy recursive formulas that require nested integrations, which in most cases, and certainly in the present one, become prohibitively complicated. This, in turn, leads to error bounds which, while important theoretically, are not practicable to evaluate. In an application to Bessel functions this was addressed in \cite{Dunster:2017:COA} in which LG expansions were used with the asymptotic series in the exponent of the approximating exponential function \cite{Dunster:1998:AOT,Dunster:2020:LGE}. We use this approach here.

Returning to the reverse generalised Bessel polynomials $\theta_{n}(z;a)$, they satisfy the linear second-order differential equation
%%%%%%%%%%%
\begin{equation} 
\label{eq03}
z\frac{d^{2}\theta}{dz^{2}}
-(2z+2n+a-2)\frac{d\theta}{dz}+2n\theta=0,
\end{equation}
%%%%%%%%%%%
which has a regular singularity at $z = 0$ and an irregular singularity at infinity. The roles of these two singularities are reversed for the differential equation satisfied by $y_{n}(z;a)$; asymptotic results are more straightforward to establish if finite singularities are regular, and this is the reason we focus on $\theta_{n}(z;a)$.

Following \cite[Eq. (2.3)]{Dunster:2001:GBP} define
%%%%%%%%%%%
\begin{equation} 
\label{eq04}
w_{n}^{(0)}(z;a)=2^{-n-a+1}z^{1-n-\frac{1}{2}a} 
e^{-z} \theta_{n}(z;a).
\end{equation}
%%%%%%%%%%%
Then from \cref{eq02,eq03,eq04} $w=w_{n}^{(0)}(z;a)$ satisfies
%%%%%%%%%%%
\begin{equation} 
\label{eq05}
\frac{d^{2}w}{dz^{2}}=\left\{1+\frac{a-2}{z}
+\frac{(2n+a)(2n+a-2)}{4z^{2}}\right\}w,
\end{equation}
%%%%%%%%%%%
and is recessive at infinity in the right half-plane, since
%%%%%%%%%%%
\begin{equation} 
\label{eq06}
w_{n}^{(0)}(z;a)=2^{-n-a+1}z^{1-\frac{1}{2}a} e^{-z}
\left\{1+\mathcal{O}(z^{-1})\right\}
\quad (z \to \infty),
\end{equation}
%%%%%%%%%%%
whereas all other solutions of \cref{eq04} are dominant in the half-plane $|\arg(z)|<\frac12 \pi$. In \cite[Eqs. (2.6) and (2.10)]{Dunster:2001:GBP} two numerically satisfactory companion solutions are defined by
%%%%%%%%%%%
\begin{equation} 
\label{eq07}
w_{n}^{(1)}(z;a)=(-1)^{n+1}z^{n+\frac12 a}e^{-z}
V(n+a-1,2n+a,2z),
\end{equation}
%%%%%%%%%%%
and
%%%%%%%%%%%
\begin{equation} 
\label{eq08}
w_{n}^{(-1)}(z;a)=z^{n+\frac12 a}e^{-z}
\mathbf{M}(n+a-1,2n+a,2z),
\end{equation}
%%%%%%%%%%%
where $V(a,b,z)$ and $\mathbf{M}(a,b,z)$ are confluent hypergeometric functions (cf. \cite[pp. 255-256]{Olver:1997:ASF}). On referring to \cite[Eq. (2.9)]{Dunster:2001:GBP} we have for the former
%%%%%%%%%%%
\begin{equation} 
\label{eq09}
w_{n}^{(1)}(z;a)=2^{-n-1}z^{\frac{1}{2}a-1} e^{z}
\left\{1+\mathcal{O}(z^{-1})\right\}
\quad \left(z \to \infty,\, |\arg(-z)|
\leq \tfrac32 \pi - \delta\right),
\end{equation}
%%%%%%%%%%%
where $\delta$ is an arbitrary small positive constant, and hence is recessive at infinity in the left half-plane $|\arg(-z)|< \frac12 \pi$.

From \cite[Eq. (2.12)]{Dunster:2001:GBP}  
%%%%%%%%%%%
\begin{equation} 
\label{eq10}
w_{n}^{(-1)}(z;a)=\frac{z^{n+\frac{1}{2}a}}{\Gamma(2n+a)}
\left\{1+\mathcal{O}(z)\right\}
\quad (z \to 0),
\end{equation}
%%%%%%%%%%%
which demonstrates that it is the unique solution of \cref{eq04} that is recessive at $z=0$. From \cref{eq08} and \cite[Eq. 13.7.2]{NIST:DLMF} we also note that
%%%%%%%%%%%
\begin{multline} 
\label{eq11}
w_{n}^{(-1)}(z;a)
= \frac{z^{\frac12 a-1}e^{z}}{2^{n+1}\Gamma(n+a-1)}
\left\{1+\mathcal{O}\left(\frac{1}{z}\right)\right\}
\\
\quad \left(z \to \infty, \, 
|\arg(z)|\leq \tfrac12 \pi - \delta\right).
\end{multline}
%%%%%%%%%%%

An important connection formula that we shall use is given by \cite[Eq. (2.13)]{Dunster:2001:GBP} 
%%%%%%%%%%%
\begin{equation} 
\label{eq12}
w^{(-1)}_{n}(z; a) = (-1)^{n+1} \frac{e^{a\pi i}}{n!} 
w^{(0)}_{n}(z; a) 
+ \frac{1}{\Gamma(n + a - 1)} w^{(1)}_{n}(z; a).
\end{equation}
%%%%%%%%%%%
This verifies that the three are linearly independent for $a>1-n$, which is the case in this paper.

The plan of this paper is as follows. In \cref{sec:LG}, we derive LG expansions for all three solutions of the differential equation under consideration. These expansions are valid within specific domains of the upper half-plane, although they are not applicable near the turning point. The coefficients of these expansions can be readily and recursively determined; they are given explicitly as polynomials in certain trigonometric functions. We make use of the results from \cite{Dunster:2020:LGE}, which also provide simple and effective error bounds. Extension of our results into the lower half-plane comes immediately from the Schwarz reflection principle, since all three of our solutions are analytic in the upper half-plane and real on the positive real axis.

In \cref{sec:Airy}, we apply the method developed by \cite{Dunster:2017:COA,Dunster:2021:SEB} to obtain simplified expansions for differential equations with turning points. These expansions are expressed in terms of Airy functions and two slowly varying coefficient functions. Asymptotic expansions for these coefficient functions are also obtained and are shown to depend on the coefficients of the LG expansions in \cref{sec:LG}. Complete expansions are given for all three solutions, with the principal result for the function $\theta_n(z; a)$ presented in \cref{thm:ThetaAiry}. We numerically check the accuracy of the Airy expansion for $\theta_n(z; a)$ for real and complex values of $z$.

We mention that in a subsequent paper we plan to use the new methods of \cite{Dunster:2024:AZB} to study both a uniform asymptotic and numerical evaluation of the zeros of $\theta_{n}(z;a)$. This would extend a recent numerical study of these polynomials and their zeros carried out in \cite{Dunster:2021:CPB}.

\section{LG expansions}
\label{sec:LG}

From \cite[\S 3]{Dunster:2001:GBP} we express \cref{eq05} in the form
%%%%%%%%%%%
\begin{equation} 
\label{eq13}
\frac{d^2 w}{dz^2} 
= \left\{ u^2 f(\alpha, z) + g(z) \right\} w,
\end{equation}
%%%%%%%%%%%
where
%%%%%%%%%%%
\begin{equation} 
\label{eq14}
u= n+\frac12, \, \alpha = \frac{a - 2}{u},
\end{equation}
%%%%%%%%%%%
and
%%%%%%%%%%%
\begin{equation} 
\label{eq15}
f(\alpha, z) =
\frac{\left(z + \tfrac{1}{2} \alpha\right)^2 +1+\alpha}{z^{2}},
\quad
g(z) = -\frac{1}{4z^2}.
\end{equation}
%%%%%%%%%%%

For large $u$ the zeros of $f(\alpha,z)$ are the turning points of \cref{eq13}, and these are given by
%%%%%%%%%%%
\begin{equation} 
\label{eq16}
z_{1,2}(\alpha) 
= \pm i \sigma - \tfrac{1}{2} \alpha,
\end{equation}
%%%%%%%%%%%
where 
%%%%%%%%%%%
\begin{equation} 
\label{eq28}
\sigma=\sqrt{1+\alpha}.
\end{equation}
%%%%%%%%%%%
For $-\infty < \alpha < \infty$ the fundamental properties of these turning points can be summarised as follows.

\begin{itemize}
    \item $-\infty < \alpha < -1$, $\alpha \neq -2$, they are real and positive. We define the square roots so that $z_{1} <  z_{2}$.
    \item $\alpha  = -2 \implies z_{1} =0$ (coalesces with the pole) and $z_{2} =2$.
    \item $\alpha = -1$ they merge into a double turning point at $z=\frac12$.
    \item $-1 < \alpha < \infty$ they are complex conjugates.
\end{itemize}

Taking the above into account, we assume
%%%%%%%%%%%
\begin{equation} 
\label{eq17}
-1<-1+\delta \leq \alpha \leq \alpha_{1} < \infty,
\end{equation}
%%%%%%%%%%%
and, as such, the turning points are complex conjugates, bounded, and bounded away from the pole as well from each other. From \cref{eq14} we see that \cref{eq17} is equivalent to $-\Delta_{1} n+\frac{3}{2} \leq a \leq \Delta_{2} n$ for fixed arbitrary $\Delta_{1} \in (0,1)$ and bounded positive $\Delta_{2}$.

We note from \cref{eq16} that $z_{1,2}(\alpha) \to \pm i$ as $\alpha \to 0$ so that
%%%%%%%%%%%
\begin{equation} 
\label{eq18}
z_{1,2}(\alpha) 
= \pm i - \tfrac{1}{2}(1 \mp i)\alpha \mp \tfrac{1}{8} i \alpha^2 
+ \mathcal{O}(\alpha^3),
\end{equation}
%%%%%%%%%%%
and also
%%%%%%%%%%%
\begin{equation} 
\label{eq19}
|z_{1,2}(\alpha)| = 1 + \tfrac{1}{2} \alpha.
\end{equation}
%%%%%%%%%%%

For LG expansions we employ a variable $\xi$, and for Airy expansions a variable $\zeta$ (cf. \cite[Chaps. 10 and 11]{Olver:1997:ASF}), and these are defined by
%%%%%%%%%%%
\begin{equation} 
\label{eq20}
\frac{2}{3}\zeta^{3/2}=\xi = \int_{z_{1}(\alpha)}^{z} f^{1/2}(\alpha, t)\, dt.
\end{equation}
%%%%%%%%%%%
The lower limit was chosen so that $\zeta=\xi=0$ at the turning point $z=z_{1}$, and this is a requirement for our Airy function expansions of \cref{sec:Airy}. Upon integration, we obtain the explicit expression
%%%%%%%%%%%
\begin{multline}
\label{eq21}
\xi=Z-\left(1+\tfrac12 \alpha\right)
\ln\left\{\frac{4Z+2\alpha(Z+z+2)+4+\alpha^{2}}{z} \right\} 
\\
+\tfrac12 \alpha\ln(2Z+2z+\alpha)
+\tfrac{1}{2}\ln(1+\alpha)
+\left(2+\tfrac12 \alpha\right)\ln(2)
-\tfrac{1}{2}(1+\alpha)\pi i,
\end{multline}
%%%%%%%%%%%
where
%%%%%%%%%%%
\begin{equation} 
\label{eq22}
Z = Z(\alpha,z) =\left\{(z - z_1)(z - z_2)\right\}^{1/2}
= \left\{\left(z + \tfrac{1}{2} \alpha\right)^2 
+1+\alpha \right\}^{1/2}.
\end{equation}
%%%%%%%%%%%
The branch in \cref{eq22} is chosen so that $Z$ is positive for $0<z<\infty$, negative for $-\infty<z<0$, and continuous elsewhere in the upper half-plane $\Im(z) \geq 0$ having a cut along a path from $z=0$ to $z=z_{1}$ on which $\Im(\xi)=0$. Thus, in this cut region $Z \sim z$ as $z \to \infty$ for $\Im(z) \geq 0$. In addition, principal branches of the logarithms in \cref{eq21} are taken.

As a result $\Re(\xi) \to \pm \infty$ as $\Re(z) \to \pm \infty$, and specifically $\xi \to \infty$ as $z \to \infty$ so that
%%%%%%%%%%%
\begin{equation} 
\label{eq23}
\xi = z + \tfrac{1}{2} \alpha \ln(2z) 
+  \tfrac{1}{2} \alpha - \tfrac{1}{2}
(1 + \alpha) \ln(1 + \alpha) - \tfrac{1}{2}(1 + \alpha)\pi i 
+ \mathcal{O}(z^{-1}).
\end{equation}
%%%%%%%%%%%
Further, $\Re(\xi) \to -\infty$ as $z \to 0^{+}$ so that
%%%%%%%%%%%
\begin{equation} 
\label{eq24}
\xi=\left(1+\tfrac12 \alpha\right)\ln\left\{\frac{2ez}{(2+\alpha)^{2}}\right\}
+\tfrac12(1+\alpha)\left\{\ln(1+\alpha)-\pi i\right\}
+\mathcal{O}(z),
\end{equation}
%%%%%%%%%%%
and $\Re(\xi) \to +\infty$ as $z \to 0^{-}$ so that
%%%%%%%%%%%
\begin{equation} 
\label{eq25}
\xi=\left(1+\tfrac12 \alpha\right)\ln\left\{
\frac{(2+\alpha)^{2}}{2e|z|}\right\}
-\tfrac12(1+\alpha)\ln(1+\alpha)
-\tfrac12 \pi i
+\mathcal{O}(z).
\end{equation}
%%%%%%%%%%%

In \cref{fig:Figzplane,fig:Figxiplane1,fig:Figxiplane2} the $z-\xi$ map of the upper half-plane $0 \leq \arg(z) \leq \pi$ is depicted, with corresponding points labeled $\mathsf{A},\mathsf{B},\ldots,\mathsf{J}$; \cref{fig:Figxiplane1} shows the $\xi$ map of the region in the $z$ plane bounded by $\mathsf{A}\mathsf{B}\mathsf{C}\mathsf{D}\mathsf{E}\mathsf{F}\mathsf{A}$, and \cref{fig:Figxiplane2} shows the $\xi$ map of the region in the $z$ plane bounded by $\mathsf{A}\mathsf{F}\mathsf{G}\mathsf{H}\mathsf{I}\mathsf{J}\mathsf{A}$. 

Note that in \cref{fig:Figzplane} the curves $\mathsf{A}\mathsf{D}$, $\mathsf{A}\mathsf{H}$ and $\mathsf{A}\mathsf{F}$ emanating from the turning point $z_{1}$ are the so-called Stokes lines, defined by $\Re(\xi)=0$, and it is near these that the complex zeros of certain solutions lie. In particular, the zeros in the upper half-plane of $\theta_{n}(z;a)$ lie close to the curve $\mathsf{A}\mathsf{H}$.

\begin{figure}
 \centering
 \includegraphics[
 width=1.0\textwidth,keepaspectratio]
 {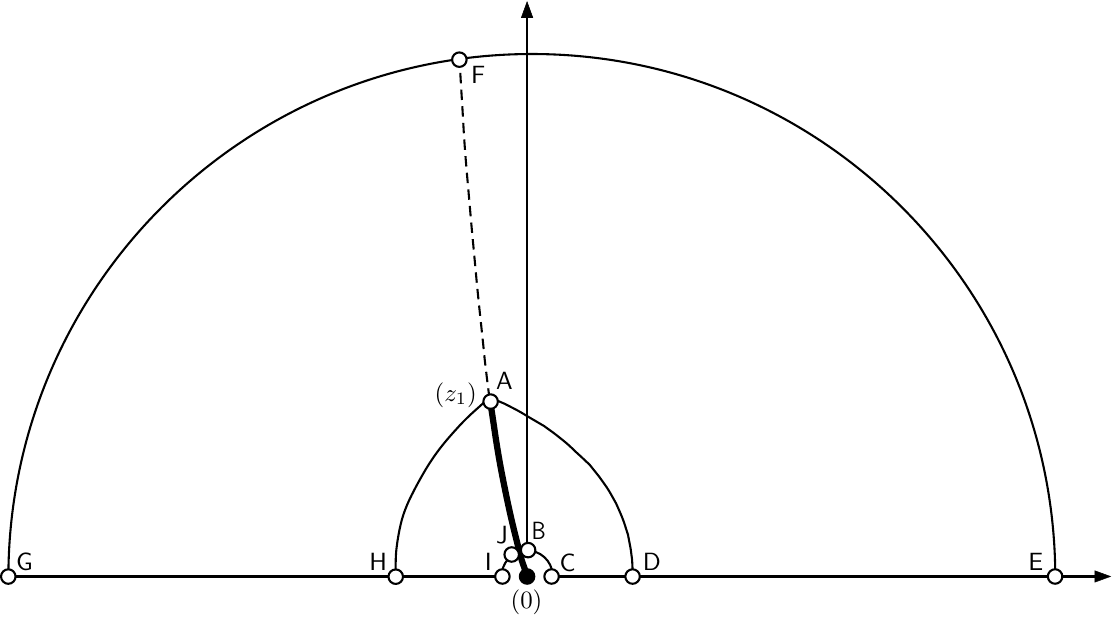}
 \caption{$z$ plane.}
 \label{fig:Figzplane}
\end{figure}

\begin{figure}
 \centering
 \includegraphics[
 width=1.0\textwidth,keepaspectratio]
 {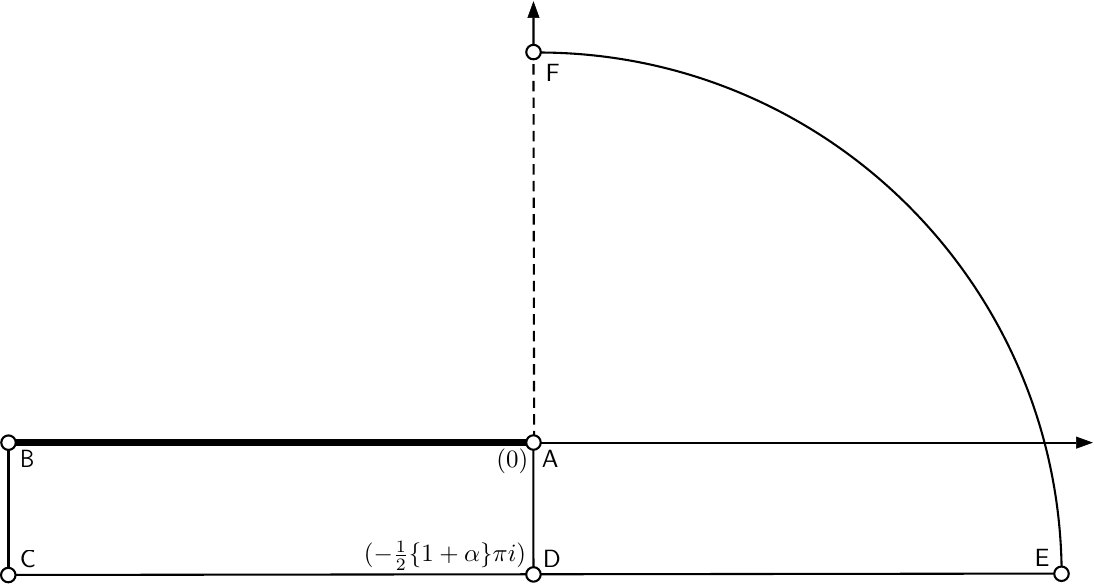}
 \caption{$\xi$ plane.}
 \label{fig:Figxiplane1}
\end{figure}

\begin{figure}
 \centering
 \includegraphics[
 width=1.0\textwidth,keepaspectratio]
 {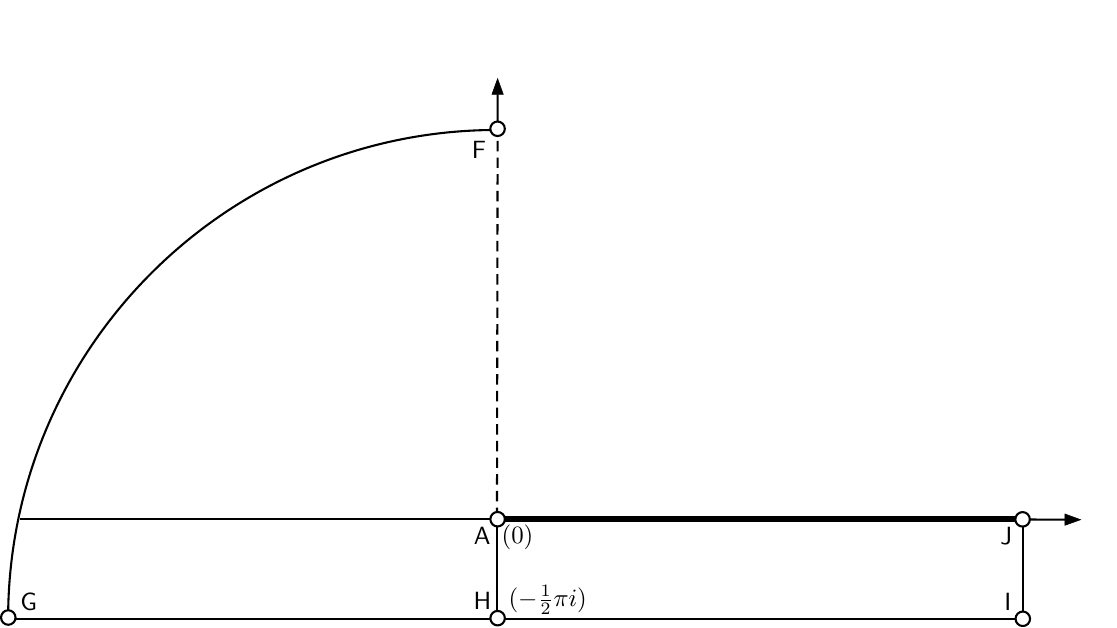}
 \caption{$\xi$ plane.}
 \label{fig:Figxiplane2}
\end{figure}

The LG expansions involve the Schwarzian derivative \cite[Eq. (1.6)]{Dunster:2020:LGE}, which we consider as a function of $z$. From \cref{eq15} it is given by
%%%%%%%%%%%
\begin{equation} 
\label{eq26}
\psi(z) = -\frac{z \left\{  4z^3-(4 + 3\alpha)(4 + \alpha)z 
- \alpha(2 + \alpha)^2 \right\}}
{16\left\{ \left(z + \tfrac{1}{2} \alpha\right)^2 
+1+\alpha\right\}^{3}}.
\end{equation}
%%%%%%%%%%%
Note that $\psi(z)=\mathcal{O}(z)$ as $z \to 0$, and  $\psi(z)=\mathcal{O}(z^{-2})$ as $z \to \infty$, and as such our LG expansions will be valid at both these singularities of \cref{eq13}. On the other hand, it is unbounded at both turning points $z=z_{1,2}$, and therefore the LG expansions will not be valid at either. As noted, expansions valid at $z=z_{1}$ in terms of Airy functions will be constructed in \cref{sec:Airy}.

We now consider the evaluation of the LG coefficients given by \cite[Eqs. (1.10) - (1.12)]{Dunster:2020:LGE}. To aid in the integrations required we follow \cite[Eqs. (6.1) and (6.3)]{Dunster:2001:GBP} (but with a different notation)) and introduce a parameter $\phi$ by
%%%%%%%%%%%
\begin{equation} 
\label{eq27}
\sin(\phi)=\frac{\sigma}{Z},
\end{equation}
%%%%%%%%%%%
where $\sigma$ and $Z$ are given by \cref{eq28,eq22}. Equivalently, on referring to \cref{eq22}, we have
%%%%%%%%%%%
\begin{equation} 
\label{eq29}
\cos(\phi)
=\frac{z+\tfrac{1}{2} \alpha}{Z}.
\end{equation}
%%%%%%%%%%%

With the branch of $Z$ as described above, we note that both $\sin(\phi)$ and $\cos(\phi)$ are positive for $z>0$ and continuous elsewhere in the upper half-plane $\Re(z) \geq 0$ having the cut $\mathsf{A}\mathsf{B}$, $\mathsf{A}\mathsf{J}$ from $z=0$ to $z=z_{1}$ depicted in \cref{fig:Figzplane,fig:Figxiplane1,fig:Figxiplane2}. In particular, $\cos(\phi)>0$ for $-\infty<z<-\frac12 \alpha$, and $\cos(\phi) \to 1$ as $z \to \infty$ along any ray in the upper half-plane. We note that, from dividing \cref{eq29} by \cref{eq27},
%%%%%%%%%%%
\begin{equation} 
\label{eq30}
z=\sigma\,\cot(\phi)
-\tfrac{1}{2}\alpha.
\end{equation}
%%%%%%%%%%%

On recalling that $d\xi/dz=f^{1/2}(\alpha,z)$ (cf. \cref{eq20}), we have from \cref{eq27,eq29,eq30}
%%%%%%%%%%%
\begin{equation} 
\label{eq31}
\frac{d\phi}{d\xi}
=-\frac{z\sin^{3}(\phi)}{1+\alpha}.
\end{equation}
%%%%%%%%%%%
Hence, from \cref{eq22,eq26,eq27,eq28,eq29,eq30,eq31} and \cite[Eqs. (1.10) - (1.12)]{Dunster:2020:LGE},
%%%%%%%%%%%
\begin{equation} 
\label{eq32}
\mathrm{E}_{1}(\alpha,\phi)
=\frac{\sin(\phi)\left\{5\cos^{2}(\phi)-2\right\}}
{24\,\sigma}+\frac{\alpha\left\{\cos(\phi)
\left(5\cos^{2}(\phi)-6\right)+1 \right\}}{48(1+\alpha)},
\end{equation}
%%%%%%%%%%%
%%%%%%%%%%%
\begin{multline} 
\label{eq33}
\mathrm{E}_{2}(\alpha,\phi)
=\frac{\alpha\cos(\phi)\sin^{3}(\phi)\left\{3-5\cos^{2}(\phi)\right\}}
{16\,\sigma^{3}}
\\
+\frac{\sin^{2}(\phi)}{64(1+\alpha)^{2}}
\left\{ 5\left(4-\alpha^{2}+4\alpha\right)\cos^{4}(\phi)
+(7\alpha^2 - 16\alpha - 16)\cos^{2}(\phi)
-2\alpha^{2} \right\},
\end{multline}
%%%%%%%%%%%
and for $s=2,3,4,\ldots$
%%%%%%%%%%%
\begin{equation} 
\label{eq34}
\mathrm{E}_{s+1}(\alpha,\phi) =
G(\alpha,\phi)
\frac{\partial \mathrm{E}_{s}(\alpha,\phi)}
{\partial \phi}
+\int_{0}^{\phi} 
G(\alpha,\varphi)\sum\limits_{j=1}^{s-1}
\frac{\partial \mathrm{E}_{j}(\alpha,\varphi)}{\partial \varphi}
\frac{\partial \mathrm{E}_{s-j}(\alpha,\varphi)}{\partial \varphi} d\varphi,
\end{equation}
%%%%%%%%%%%
where from \cref{eq30,eq31}
%%%%%%%%%%%
\begin{equation} 
\label{eq35}
G(\alpha,\phi)
=-\frac12 \frac{d\phi}{d\xi}
=\frac{\cos(\phi)\sin^{2}(\phi)}
{2 \sigma}
-\frac{\alpha \sin^{3}(\phi)}{4(1+\alpha)}.
\end{equation}
%%%%%%%%%%%

Our desired LG solutions are then given by \cite[Eqs. (1.16) - (1.18)]{Dunster:2020:LGE}, with the obvious change of notation. We actually have two of the form with $+u\xi$ in the exponent of the approximating exponential function, and two with $-u\xi$ in the exponent, but these all differ due to different error terms which depend on the so-called reference points, which are singularities of \cref{eq13} at which the chosen solution is recessive. Specifically, for $N=2,3,4,\ldots$, we construct LG solutions of the equation as given by
%%%%%%%%%%%%%%%%%%
\begin{equation}
\label{eq36}
W_{0}(u,a,z) =
\exp \left\{ -u\xi +\sum\limits_{s=1}^{N-1}{(-1)^{s}
\frac{\mathrm{E}_{s}(\alpha,\phi)}{u ^{s}}}\right\} 
\left\{ 1+\eta_{N,0}(u,\alpha,z) \right\},
\end{equation}
%%%%%%%%%%%%%%%%%%
%%%%%%%%%%%%%%%%%%
\begin{equation}
\label{eq37}
W_{1}(u,a,z) =
\exp \left\{u\xi +\sum\limits_{s=1}^{N-1}{
\frac{\mathrm{E}_{s}(\alpha,\phi)}{u ^{s}}}\right\} 
\left\{ 1+\eta_{N,1}(u,\alpha,z) \right\},
\end{equation}
%%%%%%%%%%%%%%%%%%
%%%%%%%%%%%%%%%%%%
\begin{equation}
\label{eq38}
W_{-1}^{+}(u,a,z) =
\exp \left\{u \xi +\sum\limits_{s=1}^{N-1}
{\frac{\mathrm{E}_{s}(\alpha,\phi)}{u^{s}}}\right\} 
\frac{1+\eta_{N,-1}^{+}(u,\alpha,z)}
{1+\eta_{N,-1}^{+}(u,\alpha,\infty)},
\end{equation}
%%%%%%%%%%%%%%%%%%
and 
%%%%%%%%%%%%%%%%%%
\begin{equation}
\label{eq39}
W_{-1}^{-}(u,a,z) =
\exp \left\{-u \xi +\sum\limits_{s=1}^{N-1}(-1)^{s}
{\frac{\mathrm{E}_{s}(\alpha,\phi)}{u^{s}}}\right\} 
\frac{1+\eta_{N,-1}^{-}(u,\alpha,z)}
{1+\eta_{N,-1}^{-}(u,\alpha,\infty)}.
\end{equation}
%%%%%%%%%%%%%%%%%%
The factors $1+\eta_{N,-1}^{\pm}(u,\alpha,\infty)$ in \cref{eq38,eq39} were introduced to make these two solutions independent of $N$ (as are the other two), as we shall show below.

Bounds for the $\eta$-error terms are given as follows. Using \cref{eq35} let
%%%%%%%%%%%%%%%%%%
\begin{equation}
\label{eq40}
\mathrm{F}_{s}(\alpha,z)
=\frac{\partial \mathrm{E}_{s}(\alpha,\phi)}{\partial \xi}
=-2G(\alpha,\phi)
\frac{\partial \mathrm{E}_{s}(\alpha,\phi)}{\partial \phi},
\end{equation}
%%%%%%%%%%%%%%%%%%
where the $\sin(\phi)$ and $\cos(\phi)$ terms appearing on the RHS are regarded as functions of $z$, as given by \cref{eq22,eq27,eq38}. Now let us choose the reference points that appear in the error bounds, and which determine the asymptotic regions of validity. Firstly, let $z=\alpha_{0}=+\infty$ ($\arg(z)=0$), and  $z=\alpha_{1}=-\infty$ ($\arg(z)=\pi$), which correspond to $\xi =\infty-\frac12(1+\alpha) \pi i$ and $\xi = -\infty-\frac12 \pi i$, respectively (see \cref{fig:Figxiplane1,fig:Figxiplane2}). Next, let $z=\alpha_{-1}^{\pm}=0^{\pm}$, i.e. the limit as $z$ approaches zero through positive and negative values, respectively; cf. the points  labeled $\mathsf{C}$ and $\mathsf{I}$ in \cref{fig:Figzplane} to the right and left of the cut.

Now, from \cref{eq15,eq20,eq22},
%%%%%%%%%%%%%%%%%%
\begin{equation}
\label{eq41}
\frac{d\xi}{dz}=\frac{Z(\alpha,z)}{z}.
\end{equation}
%%%%%%%%%%%%%%%%%%
Then we have from \cite[Thm. 1.1]{Dunster:2020:LGE} for $j=0,1$ and $N=2,3,4,\ldots$
%%%%%%%%%%%%%%%%%%
\begin{equation}
\label{eq42}
\left\vert \eta_{N,j}(u,\alpha,z)\right\vert  
\leq \frac{1}{u^{N}} \Phi_{N,j}(u,\alpha,z) 
\exp \left\{\frac{1}{u }\Psi_{N,j}(u,\alpha,z) 
+\frac{1}{u^{N}} \Phi_{N,j}(u,\alpha,z)\right\},
\end{equation}
%%%%%%%%%%%%%%%%%%
where
%%%%%%%%%%%%%%%%%%
\begin{multline}
\label{eq43}
\Phi_{N,j}(u,\alpha,z) 
=2\int_{\alpha _{j}}^{z}
{\left\vert {t^{-1}Z(\alpha,t)  
\mathrm{F}_{N}(\alpha,t) dt}\right\vert } 
\\
+\sum\limits_{s=1}^{N-1}{\frac{1}{u^{s}}
\sum\limits_{k=s}^{N-1}
\int_{\alpha _{j}}^{z}
\left\vert{t^{-1}Z(\alpha,t)
\mathrm{F}_{k}(\alpha,t) 
\mathrm{F}_{s+N-k-1}(\alpha,t) dt}\right\vert},
\end{multline}
%%%%%%%%%%%%%%%%%%
and
%%%%%%%%%%%%%%%%
\begin{equation}
\label{eq44}
\Psi_{N,j}(u,\alpha,z) 
=4\sum\limits_{s=0}^{N-2}\frac{1}{u^{s}}
\int_{\alpha _{j}}^{z}
{\left\vert {t^{-1}Z(\alpha,t)
\mathrm{F}_{s+1}(\alpha,t) dt}\right\vert }.
\end{equation}
%%%%%%%%%%%%%%%%%%
The error terms $\eta_{N,-1}^{\pm}(u,\alpha,z)$ also satisfy these bounds, with $\alpha_{j}$ replaced by $\alpha_{-1}^{\pm}$.

The paths for all the integrals (i) consist of a finite chain of $R_{2}$ arcs (as defined in \cite[Chap. 5, \S 3.4]{Olver:1997:ASF}), and (ii) $\Re\{\xi(t)\}$ is monotonic as $t$ passes along the path from $\alpha_{j}$ (or $\alpha_{-1}^{\pm}$) to $z$, where $\xi(t)$ is given by \cref{eq21,eq22} with $z=t$. For our purposes, paths which correspond to polygons in the $\xi$ plane suffice for all these bounds. 

For $j=0,1$ let the regions where the bounds for $\eta_{N,j}(u,\alpha,z)$ hold in the $z$ plane be denoted by $Z_{j}$, with $\Xi_{j}$ denoting the corresponding ones in the $\xi$ plane. Similarly for $\eta_{N,-1}^{\pm}(u,\alpha,z)$, let these regions be denoted by $Z_{-1}^{\pm}$, with $\Xi_{-1}^{\pm}$ denoting the corresponding ones in the $\xi$ plane. Then $Z_{j}$ consists of all points $z$ that can be linked to $\alpha_{j}$ by such a path and for which the integrals converge (and likewise for $Z_{-1}^{\pm}$). They do converge at the singularities $z=0$ and $z=\infty$, but not at the turning point $z=z_{1}$ ($\xi=0$), and hence the paths must be bounded away from this point. 

Now it is seen that $Z_{0}$ includes all points in the upper half-plane $0 \leq \arg(z) \leq \pi$ except the closed region bounded by $\mathsf{A}\mathsf{H}\mathsf{I}\mathsf{J}\mathsf{A}$. Similarly, $Z_{1}$ includes all points in the same upper half-plane except the closed region bounded by $\mathsf{A}\mathsf{B}\mathsf{C}\mathsf{D}\mathsf{A}$. Furthermore, $Z_{-1}^{+}$ is the region bounded by $\mathsf{A}\mathsf{B}\mathsf{C}\mathsf{D}\mathsf{E}\mathsf{F}\mathsf{A}$, and $Z_{-1}^{-}$ is the region bounded by $\mathsf{A}\mathsf{F}\mathsf{G}\mathsf{I}\mathsf{J}\mathsf{A}$, where in both cases points on the bounding Stokes curve $\mathsf{A}\mathsf{F}$ being excluded. 

For all four regions it is understood that the points $\mathsf{B}$, $\mathsf{C}$, $\mathsf{I}$ and $\mathsf{J}$ are arbitrarily close to $z=0$, and the points $\mathsf{E}$, $\mathsf{F}$ and $\mathsf{G}$ are arbitrarily large. 

Before we match the LG expansions with the three fundamental solutions, we define three domains $Z_{j,k}=Z_{j}\cap Z_{k}$ which we will be important in the next section on Airy function expansions. These are the unshaded regions illustrated in \cref{fig:Z0UZ1,fig:Zm1UZO,fig:Zm1UZ1}, with the dashed boundaries open (they are not included in the regions).

\begin{figure}
 \centering
 \includegraphics[
 width=1.0\textwidth,keepaspectratio]
 {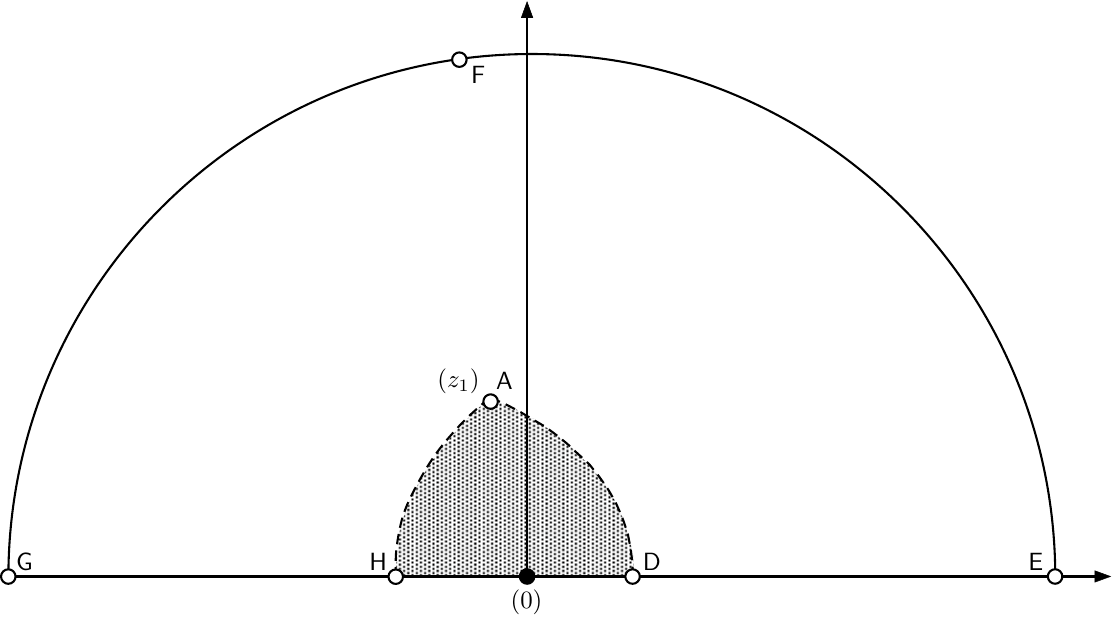}
 \caption{$z$ plane: region $Z_{0,1}$}
 \label{fig:Z0UZ1}
\end{figure}

\begin{figure}
 \centering
 \includegraphics[
 width=1.0\textwidth,keepaspectratio]
 {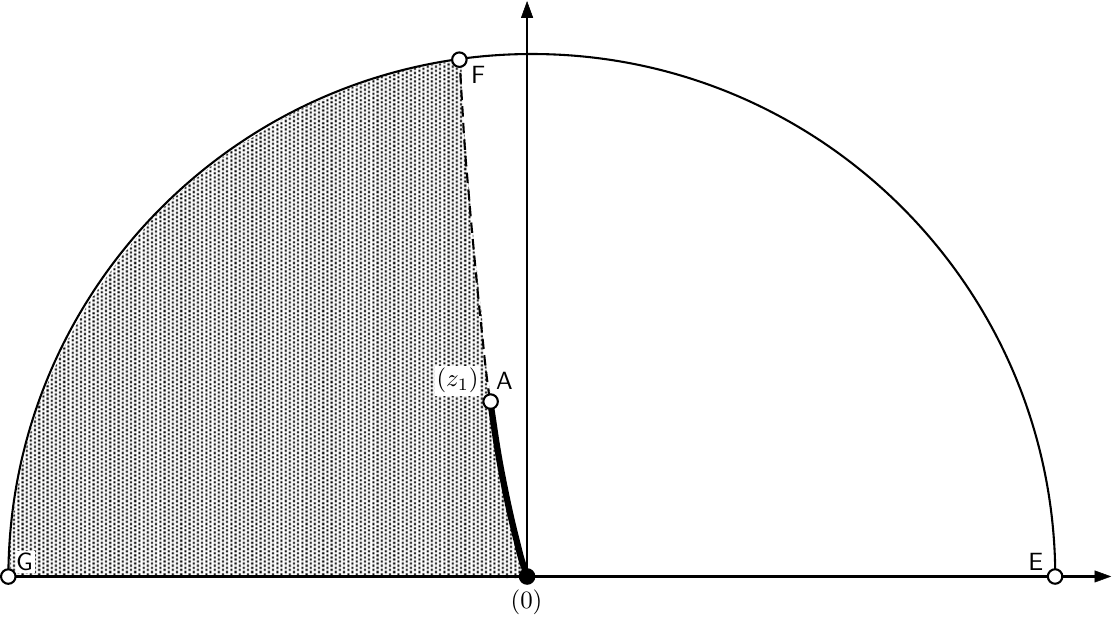}
 \caption{$z$ plane: region $Z_{-1,0}$.}
 \label{fig:Zm1UZO}
\end{figure}

\begin{figure}
 \centering
 \includegraphics[
 width=1.0\textwidth,keepaspectratio]
 {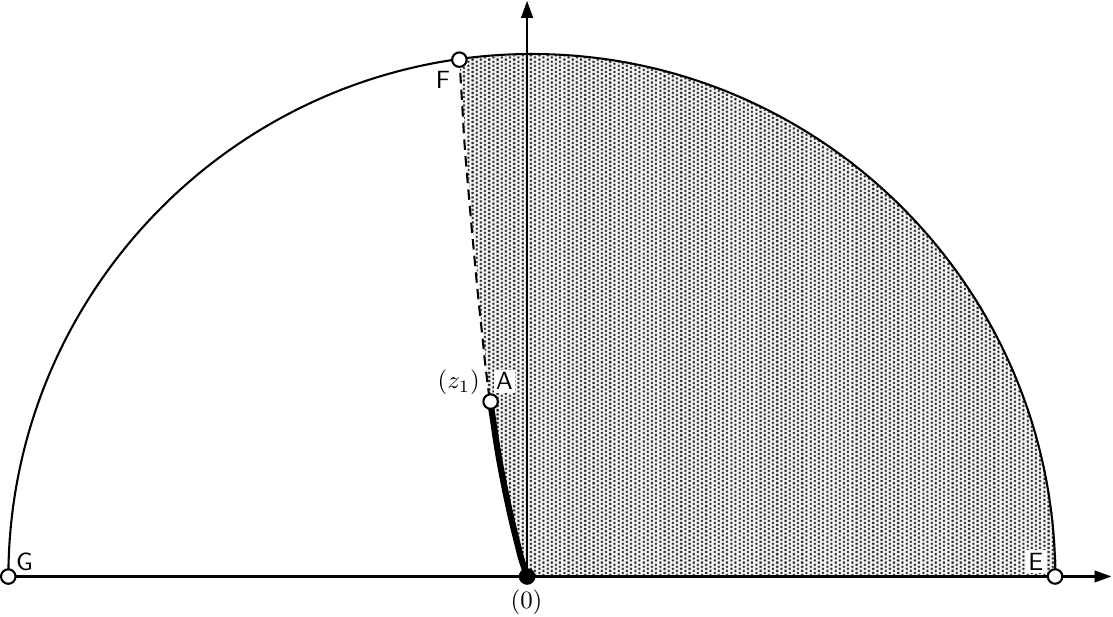}
 \caption{$z$ plane: region $Z_{-1,1}$.}
 \label{fig:Zm1UZ1}
\end{figure}

Let us now match the LG expansions with $w_{n}^{(j)}(uz;a)$ ($j=0,\pm1$). Firstly, $\lim_{z \to \infty} \eta_{N,0}(u,\alpha,z)=0$. Furthermore, from \cref{eq27}, we note that $\phi \to 0$ as $z \to \infty$, and hence from \cref{eq32,eq33,eq34,eq35} $\lim_{z \to \infty}\mathrm{E}_{s}(\alpha,\phi)=\mathrm{E}_{s}(\alpha,0)=0$. Thus, from \cref{eq36}, $W_{0}(u,a,z)$ has the same unique (recessive) behaviour at $\Re(z)=+\infty$ regardless of the value of $N$, and therefore is independent of this parameter. Likewise for $W_{1}(u,a,z)$ and $W_{-1}^{\pm}(u,a,z)$ with their recessive properties at $\Re(z)=-\infty$ and $z=0^{\pm}$, respectively; for the latter, note from \cref{eq24,eq25} that $\exp(\pm u\xi) \to 0$ as $z \to 0^{\pm}$.

Now, for solutions recessive at $z=\alpha_{0}=+\infty$, from \cref{eq06,eq15,eq23,eq36} we have
%%%%%%%%%%%
\begin{equation} 
\label{eq45}
w_{n}^{(0)}(uz;a)
=C_{n}^{(0)}(a) f^{-1/4}(a,z)W_{0}(u,a,z),
\end{equation}
%%%%%%%%%%%
where, from comparing both sides as $z \to \infty$,
%%%%%%%%%%%
\begin{equation} 
\label{eq46}
C_{n}^{(0)}(a)
=\frac{e^{-u(1+\alpha)\pi i/2} }{\sqrt{2}\,\{4(1+\alpha)\}^{u/2}}
\left\{\frac{e}{2u(1+\alpha)}\right\}^{u\alpha/2}.
\end{equation}
%%%%%%%%%%%

We do a similar matching of the solutions of \cref{eq13} with $f^{-1/4}(a,z)W_{1}(u,a,z)$ and $f^{-1/4}(a,z)W_{-1}(u,a,z)$, again by finding the multiplicative constants by comparison at $z=\infty$ (even though the latter is not recessive at infinity). For the former, we have for solutions recessive at $z=\alpha_{1}=-\infty$
%%%%%%%%%%%
\begin{equation} 
\label{eq47}
w_{n}^{(1)}(uz;a)
=C_{n}^{(1)}(a) f^{-1/4}(a,z)W_{1}(u,a,z),
\end{equation}
%%%%%%%%%%%
for some constant $C_{n}^{(1)}(a)$. As we remarked, to determine this we again compare both sides at $z=\infty$. From \cref{eq09,eq15,eq23,eq37,eq47} and the fact that $\lim_{z \to \infty} \eta_{N,1}(u,a,z)=0$, we find that
%%%%%%%%%%%
\begin{equation} 
\label{eq48}
C_{n}^{(1)}(a)
=\frac{e^{u(1+\alpha)\pi i/2} }{\sqrt{2}}
\left(\frac{1+\alpha}{4}\right)^{u/2}
\left\{\frac{u(1+\alpha)}{2e}\right\}^{u\alpha/2}.
\end{equation}
%%%%%%%%%%%

Finally, the LG solutions $f^{-1/4}(a,z)W_{-1}^{\pm}(u,a,z)$ as given by \cref{eq38,eq39} are both recessive at $z=0$, and hence to within a multiplicative constant are the same, and both proportional to $w_{n}^{(-1)}(uz;a)$. Thus we have
%%%%%%%%%%%
\begin{equation} 
\label{eq49}
w_{n}^{(-1)}(uz;a)
=C_{n}^{(-1)\pm}(a)f^{-1/4}(a,z)W_{-1}^{\pm}(u,a,z),
\end{equation}
%%%%%%%%%%%
where from \cref{eq11,eq15,eq23,eq38}
%%%%%%%%%%%
\begin{equation} 
\label{eq50}
C_{n}^{(-1)+}(a)
=\frac{e^{u(1+\alpha)\pi i/2} (1+\alpha)^{u/2}}
{2^{n+1}\Gamma(n+a-1)}
\left\{\frac{u(1+\alpha)}{2e}\right\}^{u\alpha/2}.
\end{equation}
%%%%%%%%%%%
Similarly, from \cref{eq06,eq09,eq12,eq15,eq23,eq39},
%%%%%%%%%%%
\begin{equation} 
\label{eq51}
C_{n}^{(-1)-}(a)
=\frac{ie^{u(1+\alpha)\pi i/2}}{\sqrt{2}\,
\left\{4(1+\alpha)\right\}^{u/2}n!}
\left\{\frac{e}{2u(1+\alpha)}\right\}^{u\alpha/2}.
\end{equation}
%%%%%%%%%%%

\section{Airy expansions}
\label{sec:Airy}

Following \cite{Dunster:2017:COA} for some appropriately selected functions $\mathcal{A}(u,a,z)$ and $\mathcal{B}(u,a,z)$ let us define
%%%%%%%%%%%%%%%%%%
\begin{equation}
\label{eq52}
Y_{l}(u,a,z)
=\mathrm{Ai}_{l} \left(u^{2/3}\zeta\right)\mathcal{A}(u,a,z)
+\mathrm{Ai}'_{l} \left(u^{2/3}\zeta \right)\mathcal{B}(u,a,z)
\quad (l=0,\pm 1),
\end{equation}
%%%%%%%%%%%%%%%%%%
where $\zeta$ is given by \cref{eq20,eq21}, and we have used Olver's notation for complex-valued Airy functions $\mathrm{Ai}_{j}(z) =\mathrm{Ai}(z e^{-2\pi ij/3})$ ($j=0,\pm 1$) (see \cite[\S 9.2(iii)]{NIST:DLMF}). From (\cite[Eq. 9.2.12]{NIST:DLMF}) we note that
%%%%%%%%%%%%%%%%%%
\begin{equation}
\label{eq53}
\mathrm{Ai}(z) 
=e^{\pi i/3}\mathrm{Ai}_{1}(z) 
+e^{-\pi i/3}\mathrm{Ai}_{-1}(z).
\end{equation}
%%%%%%%%%%%%%%%%%%

Now $Y_{0}(u,a,z)$ is recessive as $z \to \alpha_{0} = +\infty$ since $\Re(\zeta) \to +\infty$ in this limit (cf. \cite[Eq. 9.7.5]{NIST:DLMF}). In addition, $Y_{1}(u,a,z)$ is recessive as $z \to \alpha_{1} = -\infty$ since $\Re(\zeta e^{-2\pi i/3}) \to +\infty$ in this limit. We therefore define $\mathcal{A}(u,a,z)$ and $\mathcal{B}(u,a,z)$ uniquely using the pair of equations
%%%%%%%%%%%
\begin{equation}
\label{eq54}
(-1)^{n} \frac{e^{a\pi i}}{n!} w^{(0)}_{n}(uz; a) 
= e^{-\pi i/3}
\left\{\frac{\zeta}{f(a,z)}\right\}^{1/4} Y_{0}(u,a,z),
\end{equation}
%%%%%%%%%%%
and
%%%%%%%%%%%
\begin{equation}
\label{eq55}
\frac{1}{\Gamma(n + a - 1)} w^{(1)}_{n}(uz; a)
=\left\{\frac{\zeta}{f(a,z)}\right\}^{1/4} Y_{1}(u,a,z),
\end{equation}
%%%%%%%%%%%
noting the similar recessive behaviour at the respective singularities in both cases. The constants appearing in these equations were chosen so that
%%%%%%%%%%%
\begin{equation} 
\label{eq56}
w^{(-1)}_{n}(z; a) = e^{\pi i/3}
\left\{\frac{\zeta}{f(a,z)}\right\}^{1/4} Y_{-1}(u,a,z),
\end{equation}
%%%%%%%%%%%
which comes from \cref{eq12,eq53}. The significance is that both functions in this relation share the same recessive property at $z=\alpha_{-1}=0$ ($\Re(\zeta e^{2\pi i/3}) = +\infty$).

Now, from \cref{eq52} and the Airy functions Wronskian \cite[Eq. 9.2.8]{NIST:DLMF}, on choosing any pair from \cref{eq54,eq55,eq56} and solving the system, we arrive at the following three representations for $\mathcal{A}(u,a,z)$ and $\mathcal{B}(u,a,z)$:
%%%%%%%%%%%
\begin{multline} 
\label{eq57}
\mathcal{A}(u,a,z)=2\pi\left\{\frac{f(a,z)}{\zeta}\right\}^{1/4}
\left\{ (-1)^{n} \frac{e^{(a+\frac16)\pi i}}{n!}
w^{(0)}_{n}(uz; a)\mathrm{Ai}_{\pm 1}'\left(u^{2/3}\zeta\right) 
\right.
\\
\left. -\lambda_{\pm 1} 
w^{(\pm 1)}_{n}(uz; a)\mathrm{Ai}'\left(u^{2/3}\zeta\right) 
\right\},
\end{multline}
%%%%%%%%%%%
%%%%%%%%%%%
\begin{multline} 
\label{eq58}
\mathcal{A}(u,a,z)=2\pi\left\{\frac{f(a,z)}{\zeta}\right\}^{1/4}
\left\{ \frac{i}{\Gamma(n + a - 1)}
w^{(1)}_{n}(uz; a)\mathrm{Ai}_{-1}'\left(u^{2/3}\zeta\right) 
\right.
\\
\left. -e^{\pi i/6}w^{(-1)}_{n}(uz; a)\mathrm{Ai}_{1}'\left(u^{2/3}\zeta\right) 
\right\},
\end{multline}
%%%%%%%%%%%
%%%%%%%%%%%
\begin{multline} 
\label{eq59}
\mathcal{B}(u,a,z)=-2\pi\left\{\frac{f(a,z)}{\zeta}\right\}^{1/4}
\left\{ (-1)^{n} \frac{e^{(a+\frac16)\pi i}}{n!}
w^{(0)}_{n}(uz; a)\mathrm{Ai}_{\pm 1}\left(u^{2/3}\zeta\right) 
\right.
\\
\left. -\lambda_{\pm 1}
w^{(\pm 1)}_{n}(uz; a)\mathrm{Ai}\left(u^{2/3}\zeta\right) 
\right\},
\end{multline}
%%%%%%%%%%%
and
%%%%%%%%%%%
\begin{multline} 
\label{eq60}
\mathcal{B}(u,a,z)=-2\pi\left\{\frac{f(a,z)}{\zeta}\right\}^{1/4}
\left\{ \frac{i}{\Gamma(n + a - 1)} 
w^{(1)}_{n}(uz; a)\mathrm{Ai}_{-1}\left(u^{2/3}\zeta\right) 
\right.
\\
\left. -e^{\pi i/6}w^{(-1)}_{n}(uz; a)\mathrm{Ai}_{1}\left(u^{2/3}\zeta\right) 
\right\},
\end{multline}
%%%%%%%%%%%
where
%%%%%%%%%%%
\begin{equation} 
\label{eq61}
\lambda_{1}=\frac{e^{-\pi i/6}}{\Gamma(n + a - 1)}, \quad
\lambda_{-1}=e^{-\pi i/6}.
\end{equation}
%%%%%%%%%%%

To obtain our desired asymptotic expansions we simply use the LG expansions for the functions involved. For the Airy functions, we have from \cite[Thm. 2.4]{Dunster:2021:SEB} that these LG expansions are given by
%%%%%%%%%%%
\begin{equation}
\label{eq62}
\mathrm{Ai}\left(u^{2/3}\zeta\right) 
\sim \frac{1}{2\pi ^{1/2}u^{1/6}\zeta^{1/4}}
\exp \left\{ -u\xi +\sum\limits_{s=1}^{\infty}
{(-1) ^{s}\frac{a_{s}}{s u^{s}\xi ^{s}}}\right\},
\end{equation}
%%%%%%%%%%%
and 
%%%%%%%%%%%
\begin{equation}
\label{eq63}
\mathrm{Ai}^{\prime }\left( u^{2/3}\zeta\right)
\sim-\frac{u^{1/6}\zeta ^{1/4}}
{2\pi ^{1/2}}\exp \left\{-u\xi
+\sum\limits_{s=1}^{\infty}{(-1)^{s}
\frac{\tilde{a}_{s}}{s u^{s}\xi ^{s}}}\right\},
\end{equation}
%%%%%%%%%%%
as $u \xi \to \infty$, uniformly for $|\arg(\zeta)|\leq \frac{2}{3}\pi$ (or equivalently $|\arg(\xi) | \leq \pi $). These regions are not maximal, but suffice for our purposes (and similarly for the expansions for $\mathrm{Ai}_{\pm 1}(u^{2/3}\zeta)$ and their derivatives). Here 
%%%%%%%%%%%%%%%%%%
\begin{equation}
\label{eq64}
a_{1}=a_{2}=\tfrac{5}{72}, \;
\tilde{a}_{1}=\tilde{a}_{2}=-\tfrac{7}{72},
\end{equation}
%%%%%%%%%%%%%%%%%%
with $a_{s}$ and $\tilde{{a}}_{s}$ ($s=3,4,5,\ldots $) satisfying the recursion formula
%%%%%%%%%%%%%%%%%%
\begin{equation}
\label{eq65}
a_{s+1}=\frac{1}{2}\left(s+1\right) a_{s}+\frac{1}{2}
\sum\limits_{j=1}^{s-1}{a_{j}a_{s-j}} 
\quad (s=2,3,4,\ldots).
\end{equation}
%%%%%%%%%%%%%%%%%%

Next, for $z \in Z_{0,1}$ (see \cref{fig:Z0UZ1}), we use the expressions \cref{eq57,eq59} (with upper signs taken for each $\pm1$), and in these we insert the LG expansions \cref{eq36,eq37,eq45,eq46,eq47,eq48} for $w^{(0)}_{n}(uz; a)$ and $w^{(1)}_{n}(uz; a)$, as well as the ones above for the Airy functions. As a result, we arrive at
%%%%%%%%%%%
\begin{multline}
\label{eq66}
\mathcal{A}(u,a,z)
\sim K_{1}(u,\alpha)\exp\left\{\sum\limits_{s=1}^{\infty}
\frac{\tilde{\mathcal{E}}_{s}(a,z)}{u^{s}}\right\}
\\
+K_{2}(u,\alpha)\exp\left\{\sum\limits_{s=1}^{\infty}(-1)^{s}
\frac{\tilde{\mathcal{E}}_{s}(a,z)}{u^{s}}\right\},
\end{multline}
%%%%%%%%%%%
and
%%%%%%%%%%%
\begin{multline}
\label{eq67}
\mathcal{B}(u,a,z)
\sim \frac{1}
{u^{1/3} \zeta^{1/2}}
\left[K_{1}(u,\alpha)\exp\left\{\sum\limits_{s=1}^{\infty}
\frac{\mathcal{E}_{s}(a,z)}{u^{s}}\right\}
\right.
\\
\left.
-K_{2}(u,\alpha)\exp\left\{\sum
\limits_{s=1}^{\infty}(-1)^{s}
\frac{\mathcal{E}_{s}(a,z)}{u^{s}}\right\}
\right],
\end{multline}
%%%%%%%%%%%
where
%%%%%%%%%%%
\begin{equation} 
\label{eq68}
K_{1}(u,\alpha)
=-e^{\pi i/12}e^{(n+a)\pi i/2}\sqrt{\frac{\pi}{2}}
\frac{u^{1/6}}{\Gamma(n+a-1)}\left(\frac{1+\alpha}{4}
\right)^{u/2}\left\{\frac{u(1+\alpha)}{2e}\right\}^{u\alpha/2},
\end{equation}
%%%%%%%%%%%
%%%%%%%%%%%
\begin{equation} 
\label{eq69}
K_{2}(u,\alpha)
=-e^{\pi i/12}e^{(n+a)\pi i/2}\sqrt{\frac{\pi}{2}}
\frac{u^{1/6}}{n!\{4(1+\alpha)\}^{u/2}}
\left\{\frac{e}{2u(1+\alpha)}\right\}^{u\alpha/2},
\end{equation}
%%%%%%%%%%%
%%%%%%%%%%%
\begin{equation} 
\label{eq70}
\tilde{\mathcal{E}}_{s}(a,z)
=\mathrm{E}_{s}(\alpha,\phi)
+(-1)^{s}\frac{\tilde{a}_{s}}{s\xi^{s}},
\end{equation}
%%%%%%%%%%%
and
%%%%%%%%%%%
\begin{equation} 
\label{eq71}
\mathcal{E}_{s}(a,z)
=\mathrm{E}_{s}(\alpha,\phi)
+(-1)^{s}\frac{a_{s}}{s\xi^{s}}.
\end{equation}
%%%%%%%%%%%

If we follow the same procedure using \cref{eq57,eq59} (with lower signs taken for each $\pm1$) for $z \in Z_{-1,0}$ (see \cref{fig:Zm1UZO}), and using \cref{eq58,eq60} (with lower signs taken for each $\pm1$) for $z \in Z_{-1,1}$ (see \cref{fig:Zm1UZ1}), we obtain the same expansions \cref{eq66,eq67}. Thus, these are valid in $Z_{0,-1}\cup Z_{0,1}\cup Z_{-1,1}$, which upon examination of \cref{fig:Z0UZ1,fig:Zm1UZO,fig:Zm1UZ1} consists of the whole upper half-plane $0 \leq \arg(z) \leq \frac12 \pi$ except for the turning point $z=z_{1}$ (and we shall address this excluded point shortly).

To simplify the expansions, we shall use the identity
%%%%%%%%%%%
\begin{equation} 
\label{eq72}
c_{1}w_{1}+c_{2}w_{2}=\sqrt{c_{1}c_{2}}
\left[\exp\left\{\tfrac12
\ln\left(c_{1}/c_{2}\right)\right\}w_{1}
+\exp\left\{-\tfrac12
\ln\left(c_{1}/c_{2}\right)\right\}w_{2}\right],
\end{equation}
%%%%%%%%%%%
for any $c_{1,2} \neq 0$. In order to do so, from \cref{eq68}, \cref{eq69} and \cite[Eqs. 5.5.5 and 5.11.1]{NIST:DLMF} we have
%%%%%%%%%%%
\begin{multline} 
\label{eq73}
\frac12 \ln \left\{\frac{K_{1}(u,\alpha)}
{K_{2}(u,\alpha)}\right\}
=\frac12 \biggl[u\alpha(\ln(u)-1)+u(1+\alpha)\ln(1+\alpha) 
\biggr.
\\ 
\left.
+\ln\left\{\Gamma\left(u+\frac12\right)\right\}
-\ln\left\{\Gamma\left(u+u\alpha+\frac12\right)\right\}\right]
\sim \sum_{s=0}^{\infty}\frac{d_{2s+1}(\alpha)}{u^{2s+1}}
\quad (u \to \infty),
\end{multline}
%%%%%%%%%%%
observing that this series only involves odd powers of $1/u$. The first four terms are found to be
%%%%%%%%%%%
\begin{equation} 
\label{eq74}
d_{1}(\alpha)=-\frac{\alpha}{48(1+\alpha)},
\end{equation}
%%%%%%%%%%%
%%%%%%%%%%%
\begin{equation} 
\label{eq75}
d_{3}(\alpha)=\frac{7\alpha\left(3+3\alpha
+\alpha^{2}\right)}{5760(1+\alpha)^{3}},
\end{equation}
%%%%%%%%%%%
%%%%%%%%%%%
\begin{equation} 
\label{eq76}
d_{5}(\alpha)=-\frac{31\alpha\left(5+10\alpha
+10\alpha^{2}+5\alpha^{3}+\alpha^{4}\right)}
{80640(1+\alpha)^{5}},
\end{equation}
%%%%%%%%%%%
and
%%%%%%%%%%%
\begin{equation} 
\label{eq77}
d_{7}(\alpha)
=\frac{127\alpha\left(7+21\alpha+35\alpha^{2}
+35\alpha^{3}+21\alpha^{4}+7\alpha^{5}+\alpha^{6}\right)}
{430080(1+\alpha)^{7}}.
\end{equation}
%%%%%%%%%%%
Then, from \cref{eq66,eq67,eq68,eq69,eq72,eq73}, as $u \to \infty$,
%%%%%%%%%%%
\begin{equation} 
\label{eq78}
\mathcal{A}(u,a,z)=\frac{e^{5\pi i/6}e^{(u+a)\pi i/2}
u^{1/6}}{2^{n-1}}
\left\{\frac{\pi }{2^{a}n!\,\Gamma(n+a-1)}
\right\}^{1/2}A(u,a,z),
\end{equation}
%%%%%%%%%%%
and
%%%%%%%%%%%
\begin{equation} 
\label{eq79}
\mathcal{B}(u,a,z)=\frac{e^{5\pi i/6}e^{(u+a)\pi i/2} 
u^{1/6}}{2^{n-1}}
\left\{\frac{\pi }{2^{a}n!\,\Gamma(n+a-1)}
\right\}^{1/2}B(u,a,z),
\end{equation}
%%%%%%%%%%%
where
%%%%%%%%%%%
\begin{equation} 
\label{eq80}
A(u,a,z) \sim 
\exp \left\{ \sum\limits_{s=1}^{\infty}
\frac{\tilde{\mathcal{E}}_{2s}(a,z) }{u^{2s}}\right\} 
\cosh \left\{ \sum\limits_{s=0}^{\infty}
\frac{\tilde{\mathcal{E}}_{2s+1}(a,z)
+d_{2s+1}(\alpha)}
{u^{2s+1}}\right\},
\end{equation}
%%%%%%%%%%%
%%%%%%%%%%%
\begin{equation} 
\label{eq81}
B(u,a,z) \sim \frac{1}{u^{1/3}\zeta^{1/2}}
\exp \left\{ \sum\limits_{s=1}^{\infty}
\frac{\mathcal{E}_{2s}(a,z) }{u^{2s}}\right\} 
\sinh \left\{ \sum\limits_{s=0}^{\infty}
\frac{\mathcal{E}_{2s+1}(a,z)
+d_{2s+1}(\alpha)}
{u^{2s+1}}\right\},
\end{equation}
%%%%%%%%%%%
uniformly for $0 \leq \arg(z) \leq \pi$ with $|z-z_{1}| \geq \delta >0$, under the condition \cref{eq17}.

\begin{lemma}
\label{lem:odd-mero}
Each $(z-z_{1})^{1/2}\{\mathrm{E}_{2s+1}(\alpha,\phi)+d_{2s+1}(\alpha)\}$ ($s=0,1,2,\ldots$), regarded as a function of $z$, is meromorphic at $z=z_{1}$.
\end{lemma}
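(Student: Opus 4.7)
My plan is to expose $\mathrm{E}_{2s+1}(\alpha,\phi)$ as an explicit polynomial in $\sin\phi$ and $\cos\phi$ with a controlled constant term, translate the parity of its monomial degrees into meromorphy at $z_{1}$ via the identities $\sin\phi=\sigma/Z$, $\cos\phi=(z+\tfrac12\alpha)/Z$ combined with $Z^{2}=(z-z_{1})(z-z_{2})$, and finally identify the constant part of $\mathrm{E}_{2s+1}$ with $-d_{2s+1}(\alpha)$.

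First I would prove by induction on $s\geq 1$ that $\mathrm{E}_{s}(\alpha,\phi)$ is a polynomial in $\sin\phi,\cos\phi$ whose non-constant monomials $\sin^{a}\phi\cos^{b}\phi$ all satisfy $a+b\equiv s\pmod 2$, with an additional $\phi$-independent constant term $c_{s}(\alpha)$ admitted only when $s$ is odd. The base cases $s=1,2$ follow directly from \cref{eq32,eq33}. The inductive step uses the recursion \cref{eq34}: $G(\alpha,\phi)$ from \cref{eq35} is of pure total degree three, $\partial/\partial\phi$ preserves each monomial's total degree, and the integration in \cref{eq34} produces a polynomial in $\sin\phi,\cos\phi$ plus an additive constant provided that each monomial in the integrand carries at least one factor of $\sin\phi$ or $\cos\phi$ raised to an odd power, so that the substitution $u=\cos\phi$ or $u=\sin\phi$ reduces the integration to that of a polynomial.

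Next, any monomial $\sin^{a}\phi\cos^{b}\phi=\sigma^{a}(z+\tfrac12\alpha)^{b}/Z^{a+b}$ with $a+b$ odd satisfies $Z^{a+b}=Z\cdot\{(z-z_{1})(z-z_{2})\}^{(a+b-1)/2}$, and since $(z-z_{1})^{1/2}/Z=(z-z_{2})^{-1/2}$ is analytic and nonzero at $z_{1}$, the product $(z-z_{1})^{1/2}\sin^{a}\phi\cos^{b}\phi$ is meromorphic at $z_{1}$ with a pole of order at most $(a+b-1)/2$. Summing, $(z-z_{1})^{1/2}\{\mathrm{E}_{2s+1}(\alpha,\phi)-c_{2s+1}(\alpha)\}$ is meromorphic at $z_{1}$, reducing the lemma to the identity $c_{2s+1}(\alpha)+d_{2s+1}(\alpha)=0$. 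For this I would appeal to the intrinsic analyticity at $z_{1}$ of the coefficient functions $\mathcal{A}(u,a,z)$ and $\mathcal{B}(u,a,z)$ defined by \cref{eq57,eq58,eq59,eq60}: the $w_{n}^{(j)}(uz;a)$ are analytic in $z$, and the factor $\{\zeta/f(a,z)\}^{1/4}$ has a removable singularity at $z_{1}$. Under the local involution $Z\to -Z$, the odd-parity monomials of $\mathrm{E}_{2s+1}$ change sign while the constants $c_{2s+1}$ and $d_{2s+1}$ are invariant, so single-valuedness of the $\cosh$ and $\sinh$ factors in \cref{eq80,eq81} together with analyticity at $z_{1}$ forces $c_{2s+1}+d_{2s+1}$ to vanish at each order in $1/u$.

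The main obstacle I expect is the structural claim in the induction, specifically ruling out pure monomials $\cos^{2k}\phi$ or $\sin^{2k}\phi$ in the integrand $G\,(\partial_{\phi}\mathrm{E}_{j})(\partial_{\phi}\mathrm{E}_{s-j})$, since such a term would integrate to a $\phi$-linear, hence non-algebraic, contribution and destroy the polynomial-in-$(\sin\phi,\cos\phi)$ structure. This requires a strengthened inductive hypothesis tracking the minimum exponent of $\sin\phi$ in each monomial of $\mathrm{E}_{s}$: both monomials of $G$ carry an explicit $\sin^{2}\phi$ factor which raises this exponent, while $\partial/\partial\phi$ lowers it by at most one, so enough $\sin\phi$ survives after two derivatives and the multiplication by $G$ to preclude the pathological pure powers. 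Once this bookkeeping is settled, the remainder of the argument is routine.
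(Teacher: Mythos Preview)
Your closing step—using the analyticity of $\mathcal{B}(u,a,z)$ (equivalently $B(u,a,z)$) at $z_{1}$ together with the formal $u^{-2}$-expansion of the $\sinh$ in \cref{eq81} to force the odd-index combinations to be single-valued order by order—is precisely the paper's argument. The paper then passes from $\mathcal{E}_{2s+1}$ back to $\mathrm{E}_{2s+1}$ via \cref{eq71} and the meromorphy of $(z-z_{1})^{1/2}\xi^{-2s-1}$. So on the decisive identification $c_{2s+1}+d_{2s+1}=0$ the two proofs coincide.

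Where you diverge is in your Part~1. The paper does not run an induction on the $\phi$-recursion \cref{eq34}; it simply invokes \cite{Dunster:2017:COA} for the fact that the even $\mathrm{E}_{2s}$ are meromorphic at $z_{1}$ and that, up to the choice of integration constant, so is $(z-z_{1})^{1/2}\mathrm{E}_{2s+1}$. Your direct proof of this structural fact has a genuine gap. The monomials $\sin^{a}\phi\cos^{b}\phi$ whose primitive contains a $\phi$-linear piece are \emph{all} those with both $a$ and $b$ even, not only the pure powers: for any $m,n\geq 0$,
\[
\int_{0}^{\phi}\sin^{2m}\varphi\,\cos^{2n}\varphi\,d\varphi
\;=\;\Bigl(\tfrac{1}{2\pi}\!\int_{0}^{2\pi}\sin^{2m}\varphi\,\cos^{2n}\varphi\,d\varphi\Bigr)\,\phi
+(\text{trigonometric polynomial}),
\]
and the leading coefficient is strictly positive. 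Your strengthened hypothesis—tracking the minimal $\sin$-exponent and using that both monomials of $G$ carry a factor $\sin^{2}\phi$—rules out only pure-cosine monomials; it happily admits, for instance, $\sin^{2}\phi\cos^{2}\phi$, whose integral $\tfrac{1}{8}\phi-\tfrac{1}{32}\sin(4\phi)$ already has a $\phi$-term. Thus nothing in your bookkeeping precludes a $\phi$-linear contamination of the even $\mathrm{E}_{2s}$ (this is exactly the case of \cref{eq34} with $s$ odd, where the integrand has even total degree), and any such term would then propagate into the odd coefficients through the non-integral summand $G\,\partial_{\phi}\mathrm{E}_{2s}$. The absence of this DC component is a genuine cancellation—equivalently, a residue vanishing for the integrand $\mathrm{F}_{2s}\,d\xi$ in the $z$-variable—which is what the cited reference supplies and which is not recoverable from monomial-level $\sin$-power counting alone.
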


\begin{proof}
From \cite{Dunster:2017:COA} the even coefficients $\mathrm{E}_{2s}(\alpha,\phi)$ ($s=1,2,3,\ldots$) are meromorphic at $z=z_{1}$. In general, the same is only true for each $(z-z_{1})^{1/2}\mathrm{E}_{2s+1}(\alpha,\phi)$ ($s=0,1,2,\ldots$) if the integration constants in the recursion relation (in our case \cref{eq34} for $s$ even) are suitably chosen, i.e. with appropriate constants added to $\mathrm{E}_{2s+1}(\alpha,\phi)$.  Now, with the aid of Maclaurin series for $\sinh(z)$ (\cite[Eq. 4.33.1]{NIST:DLMF}), we formally expand \cref{eq81} as an asymptotic series in inverse powers of $u^2$. This series holds in a punctured neighbourhood of $z=z_{1}$ ($\zeta=\xi=0$). From this we see that each $(z-z_{1})^{1/2}\{\mathcal{E}_{2s+1}(\alpha,\phi)+d_{2s+1}(\alpha)\}$ must be single-valued in this disk, since $B(u,a,z)$ is analytic at that point, as well as $(z-z_{1})^{1/2}\zeta^{-1/2}$. Now each $(z-z_{1})^{1/2}\xi^{-2s-1}$ is meromorphic at $z=z_{1}$ (see \cref{eq20}), and hence the result follows from \cref{eq71}.
\end{proof}

As an example, we have from \cref{eq16,eq27,eq22,eq29,eq32,eq74}
%%%%%%%%%%%
\begin{multline} 
\label{eq82}
\sqrt{2\sigma i}\left(z-z_{1}\right)^{1/2}
\left\{\mathrm{E}_{1}(\alpha,\phi)+d_{1}(\alpha)\right\}
=\frac{5(\sigma+i)^{2}i}{96\left(z-z_{1}\right)}
\\
+\frac{\alpha-6\sigma i}{128\sigma}
+\frac{18\sigma+31\alpha i}{1024(1+\alpha)}\left(z-z_{1}\right)
+\dots
\quad (z \to z_{1}),
\end{multline}
%%%%%%%%%%%
where $\sigma$ is given by \cref{eq28}.

In summary, from \cref{eq04,eq52,eq54,eq78,eq79}, we have for the reverse generalized Bessel polynomials our main asymptotic result. 
%%%%%%%%%%%
\begin{theorem}
\label{thm:ThetaAiry}
Let $\alpha$, $f(a,z)$, $z_{1}(\alpha)$, $\xi$ and $\zeta$ be given by \cref{eq14,eq15,eq16,eq20,eq21,eq22}. Then
%%%%%%%%%%%
\begin{multline} 
\label{eq83}
\theta_{n}(uz;a)
=\frac{u^{1/6}}{e^{(u+a+2)\pi i/2}}
\left\{\frac{2^{a} \pi n! }{\Gamma(n+a-1)}
\right\}^{1/2}
\\ \times
\left\{\frac{\zeta}{f(a,z)}\right\}^{1/4}
(uz)^{n+\frac{1}{2}a-1}  e^{uz} 
\left\{
\mathrm{Ai}\left(u^{2/3}\zeta\right)A(u,a,z)
+\mathrm{Ai}'\left(u^{2/3}\zeta \right)B(u,a,z)
\right\},
\end{multline}
%%%%%%%%%%%
where, as $u=n+\frac12 \to \infty$, the coefficient functions $A(u,a,z)$ and $B(u,a,z)$ possess the asymptotic expansions \cref{eq80,eq81}, uniformly for $\Im(z) \geq 0$ such that $|z-z_{1}|\geq \delta >0$, and $-1 <-1+\delta \leq \alpha \leq \alpha_{1} < \infty$. In these, $\tilde{\mathcal{E}}_{s}(a,z)$ and $\mathcal{E}_{s}(a,z)$ are given by \cref{eq22,eq27,eq29,eq32,eq33,eq34,eq35,eq64,eq65,eq70,eq71}, in which the constants $d_{2s+1}(\alpha)$ ($s=0,1,2,\ldots$) are those appearing in the expansion \cref{eq73}, with the first four given by \cref{eq74,eq75,eq76,eq77}.
\end{theorem}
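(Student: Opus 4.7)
The plan is to assemble the theorem directly from identities already established in the preceding pages; the statement is essentially a repackaging of \cref{eq04,eq52,eq54,eq78,eq79} together with the asymptotic information \cref{eq80,eq81}. First I would invert \cref{eq04} (with $z$ replaced by $uz$) to obtain
\[
\theta_{n}(uz;a)=2^{n+a-1}(uz)^{n+\frac12 a-1}e^{uz}\,w_{n}^{(0)}(uz;a),
\]
and then solve \cref{eq54} for $w_{n}^{(0)}(uz;a)$, which introduces a multiplicative factor $(-1)^{n}n!\,e^{-a\pi i}e^{-\pi i/3}$ in front of $\{\zeta/f(a,z)\}^{1/4}Y_{0}(u,a,z)$. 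Next I would substitute the definition \cref{eq52} with $l=0$ for $Y_{0}$, and then insert the rescaled coefficient functions from \cref{eq78,eq79}. The asymptotic expansions \cref{eq80,eq81} of $A$ and $B$, together with their claimed uniform validity on $\{z:\Im(z)\geq 0,\;|z-z_{1}|\geq\delta\}$, have already been secured: each of the three representation pairs (\cref{eq57,eq59} with upper signs on $Z_{0,1}$; \cref{eq57,eq59} with lower signs on $Z_{-1,0}$; \cref{eq58,eq60} with lower signs on $Z_{-1,1}$) is shown in the preceding derivation to yield the same expansions, and the union $Z_{0,-1}\cup Z_{0,1}\cup Z_{-1,1}$ covers the closed upper half-plane minus a neighbourhood of $z_{1}$.

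The only nontrivial computation is the collection and simplification of the overall prefactor. The powers of two combine via $2^{n+a-1}/2^{n-1}=2^{a}$, and together with the $n!$ from the solved \cref{eq54} and the factor $\{2^{-a}(n!)^{-1}\Gamma(n+a-1)^{-1}\}^{1/2}$ inside the square roots of \cref{eq78,eq79}, one obtains
\[
2^{a}\,n!\left\{\frac{\pi}{2^{a}n!\,\Gamma(n+a-1)}\right\}^{1/2}
=\left\{\frac{2^{a}\pi\,n!}{\Gamma(n+a-1)}\right\}^{1/2},
\]
matching the square-root factor in \cref{eq83}. The accumulated phase contribution is
\[
e^{n\pi i}\,e^{-a\pi i}\,e^{-\pi i/3}\,e^{5\pi i/6}\,e^{(u+a)\pi i/2}
=\exp\!\left\{i\bigl[n\pi-\tfrac{a\pi}{2}+\tfrac{\pi}{2}+\tfrac{u\pi}{2}\bigr]\right\}
=\exp\!\left\{i\tfrac{(3u-a)\pi}{2}\right\},
\]
after using $n\pi=(u-\tfrac12)\pi$. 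Comparing with the target phase $-(u+a+2)\pi/2$, the difference is $(2u+1)\pi$, which is an integer multiple of $2\pi$ because $u=n+\tfrac12$ makes $2u+1=2n+2$ even; so the two phases agree modulo $2\pi$. Combining these pieces and leaving the surviving factor $u^{1/6}\{\zeta/f(a,z)\}^{1/4}$ in place reproduces \cref{eq83} exactly.

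The main (though still modest) obstacle is purely bookkeeping: one must track four separate complex phase factors, two logarithmic square roots, and the parity-sensitive identity $e^{2u\pi i}=-1$ that is forced by $u=n+\tfrac12$, while being careful that the branch of $\{\zeta/f(a,z)\}^{1/4}$ chosen in \cref{eq54} is consistent with the one used to derive \cref{eq78,eq79}. Once these are reconciled, the uniformity statement and the asymptotic expansions of $A$ and $B$ transfer verbatim from the preceding section, and no further analysis near the turning point is needed because the hypothesis $|z-z_{1}|\geq\delta$ keeps $\xi$ bounded away from the origin so that \cref{eq62,eq63} apply.
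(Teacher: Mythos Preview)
Your proposal is correct and follows exactly the route the paper indicates: immediately before the theorem the author writes ``In summary, from \cref{eq04,eq52,eq54,eq78,eq79}, we have for the reverse generalized Bessel polynomials our main asymptotic result,'' and you carry out precisely that assembly, including the prefactor and phase bookkeeping (your use of $2u+1=2n+2\in2\mathbb{Z}$ to reconcile the phases is the right observation). There is nothing to add; the uniformity on $Z_{0,1}\cup Z_{-1,0}\cup Z_{-1,1}$ and the expansions \cref{eq80,eq81} are already established in the text preceding the theorem, just as you note.
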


\begin{remark}
$A(u,a,z)$ and $B(u,a,z)$ are analytic for $\Im(z) \geq 0$, including at the turning point $z_{1}$, but the expansions \cref{eq80,eq81} are not valid at this point. Asymptotic approximations that are valid for $|z-z_{1}|\leq \delta$ can be achieved either using the above expansions in conjunction with Cauchy's integral formula, as described in \cite[Thm. 4.2]{Dunster:2021:SEB}, or by expanding them as regular expansions involving inverse powers of $u$, as follows. Similarly to \cite[Eqs. (4.9) and (4.20)]{Dunster:2024:AEG} let $\tilde{q}_{1}(a,z)=\tilde{\mathcal{E}}_{1}(a,z)$ and
%%%%%%%%%%%%%%%%%%
\begin{equation}
\label{eq84}
\tilde{q}_{s}(a,z)=\tilde{\mathcal{E}}_{s}(a,z)
+\frac{1}{s}\sum_{j=1}^{s-1}j
\tilde{\mathcal{E}}_{j}(a,z)\tilde{q}_{s-j}(a,z)
\quad (s=2,3,4,\ldots),
\end{equation}
%%%%%%%%%%%%%%%%%%
and $q_{1}(a,z)=\mathcal{E}_{1}(a,z)$ with
%%%%%%%%%%%%%%%%%%
\begin{equation}
\label{eq85}
q_{s}(a,z)=\mathcal{E}_{s}(a,z)
+\frac{1}{s}\sum_{j=1}^{s-1}j
\mathcal{E}_{j}(a,z)q_{s-j}(a,z)
\quad (s=2,3,4,\ldots).
\end{equation}
%%%%%%%%%%%%%%%%%%
Then for $\mathrm{A}_{s}(a,z)=\tilde{q}_{2s}(a,z)$ ($s=1,2,3,\ldots$) and $\mathrm{B}_{s}(a,z)=\zeta^{-1/2}q_{2s+1}(a,z)$ ($s=0,1,2,\ldots$)
%%%%%%%%%%%%%%%%%%
\begin{equation}
\label{eq86}
A(u,a,z) \sim 
1+\sum\limits_{s=1}^{\infty}
\frac{\mathrm{A}_{s}(a,z)}{u^{2s}}
\quad (u \to \infty),
\end{equation}
%%%%%%%%%%%%%%%%%%
and
%%%%%%%%%%%%%%%%%%
\begin{equation}
\label{eq87}
B(u,a,z) \sim \frac{1}{u^{4/3}}
\sum\limits_{s=0}^{\infty}
\frac{\mathrm{B}_{s}(a,z)}{u^{2s}}
\quad (u \to \infty).
\end{equation}
%%%%%%%%%%%%%%%%%%
as $u \to \infty$, uniformly for $\Im(z) \geq 0$ and $-1 <-1+\delta \leq \alpha \leq \alpha_{1} < \infty$. Here, the coefficients $\mathrm{A}_{s}(a,z)$ and $\mathrm{B}_{s}(a,z)$ have a removable singularity at $z=z_{1}$ as a consequence of \cite[Thm. 3.1]{Dunster:2021:NKF}.
\end{remark}

\begin{figure}
 \centering
 \includegraphics[
 width=1.0\textwidth,keepaspectratio]
 {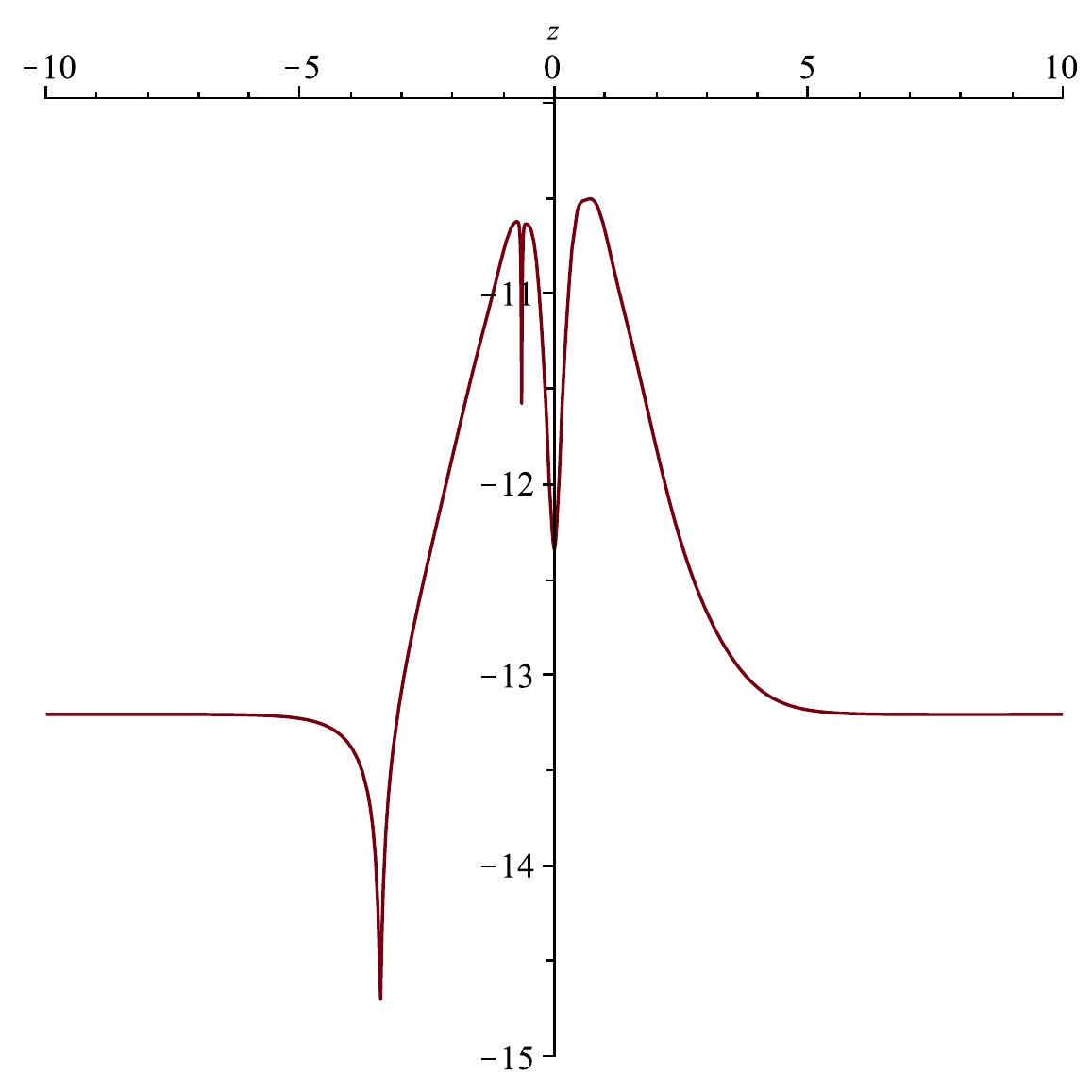}
 \caption{Graph of $\Omega(u,a,z)$ for $a=1.2$, $u=20.5$ and $-10 \leq z \leq 10$.}
 \label{fig:Omega}
\end{figure}

To demonstrate the relative accuracy of \cref{eq83} we plot in \cref{fig:Omega} the graph of
%%%%%%%%%%%%%%%%%%
\begin{equation}
\label{eq88}
\Omega(u,a,z)=\log_{10}\left| \frac{\theta_{n}(uz;a)
-\Theta_{3}(u,a,z)}{\theta_{n}(uz;a)} \right|,
\end{equation}
%%%%%%%%%%%%%%%%%%
for $a=1.2$, $u=20.5$ ($n=20$) and $-10 \leq z \leq 10$; here $\Theta_{3}(u,a,z)$ is the approximation given by the RHS of \cref{eq83} with $A(u,a,z)$ and $B(u,a,z)$ replaced by $A_{3}(u,a,z)$ and $B_{3}(u,a,z)$, respectively, where
%%%%%%%%%%%
\begin{equation} 
\label{eq89}
A_{3}(u,a,z) =
\exp \left\{ \sum\limits_{s=1}^{3}
\frac{\tilde{\mathcal{E}}_{2s}(a,z) }{u^{2s}}\right\} 
\cosh \left\{ \sum\limits_{s=0}^{2}
\frac{\tilde{\mathcal{E}}_{2s+1}(a,z)
+d_{2s+1}(\alpha)}
{u^{2s+1}}\right\},
\end{equation}
%%%%%%%%%%%
and
%%%%%%%%%%%
\begin{equation} 
\label{eq90}
B_{3}(u,a,z)= \frac{1}{u^{1/3}\zeta^{1/2}}
\exp \left\{ \sum\limits_{s=1}^{3}
\frac{\mathcal{E}_{2s}(a,z) }{u^{2s}}\right\} 
\sinh \left\{ \sum\limits_{s=0}^{2}
\frac{\mathcal{E}_{2s+1}(a,z)
+d_{2s+1}(\alpha)}
{u^{2s+1}}\right\}.
\end{equation}
%%%%%%%%%%%

\begin{figure}
 \centering
 \includegraphics[
 width=1.0\textwidth,keepaspectratio]
 {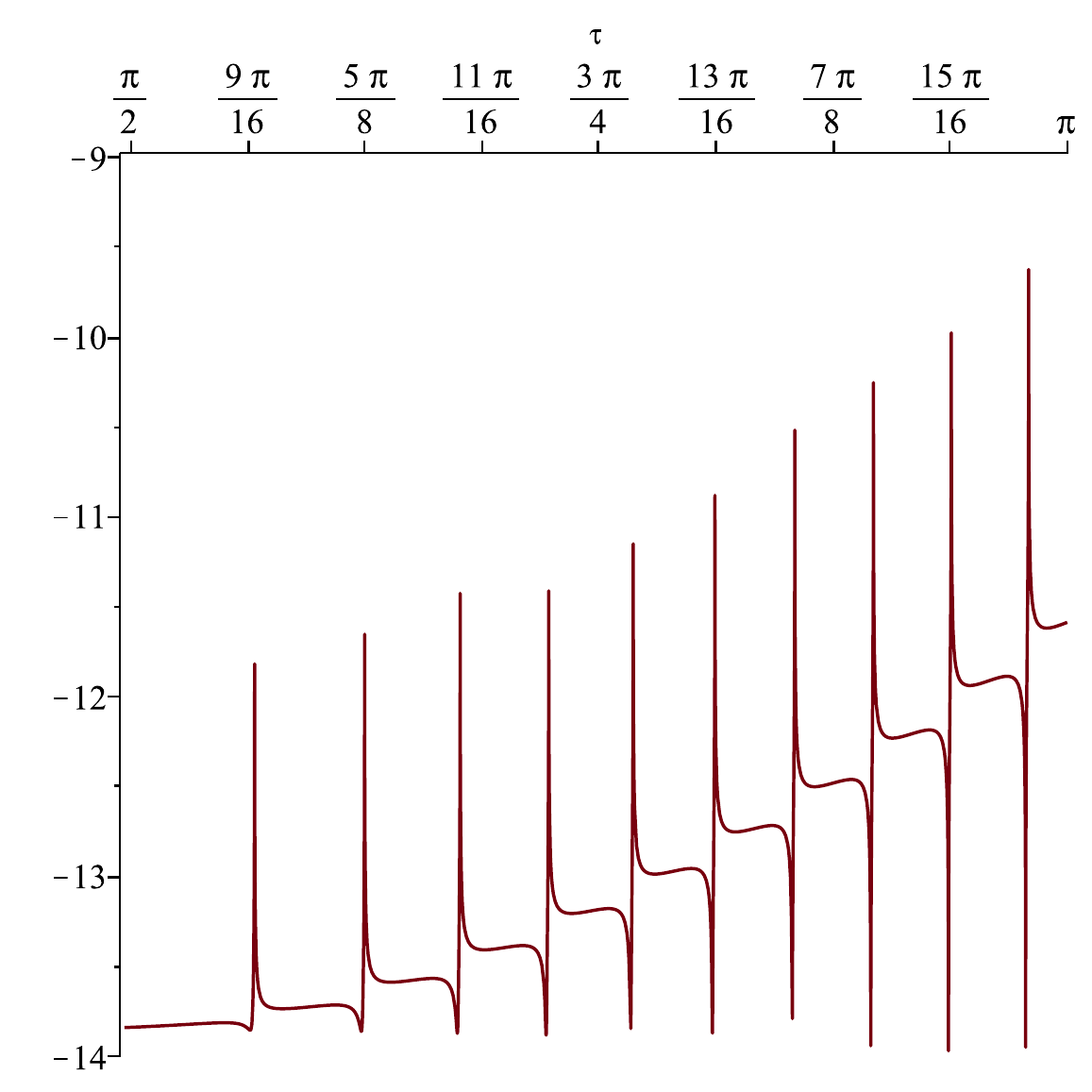}
 \caption{Graph of $\hat{\Omega}(u,a,\tau)$ for $a=1.2$, $u=20.5$ and $\tau_{0} \leq \tau \leq \pi$.}
 \label{fig:OmegaSL}
\end{figure}

\begin{figure}
 \centering
 \includegraphics[
 width=1.0\textwidth,keepaspectratio]
 {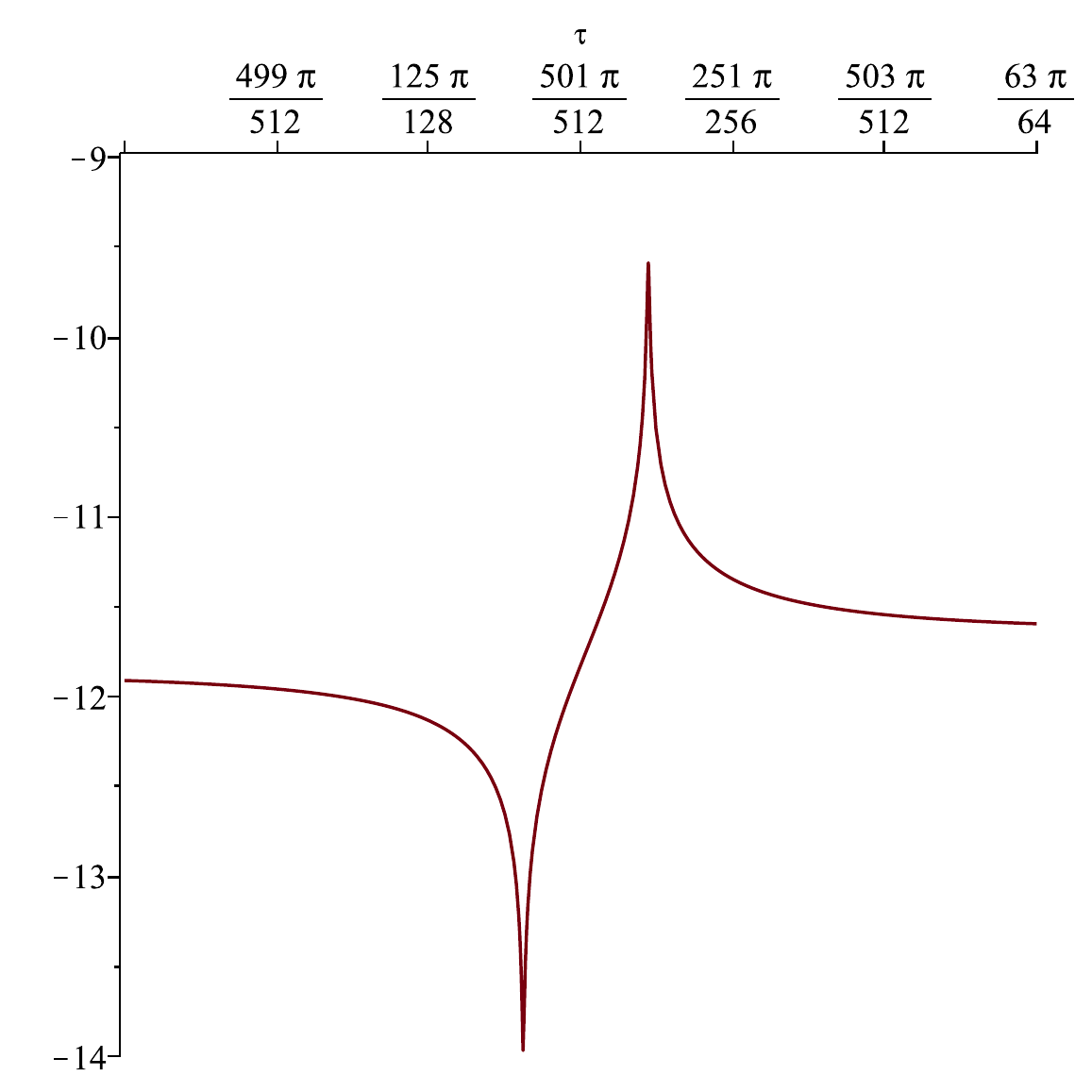}
 \caption{Graph of $\hat{\Omega}(u,a,\tau)$ for $a=1.2$, $u=20.5$ and $\frac{249}{256}\pi \leq \tau \leq \frac{63}{64}\pi$.}
 \label{fig:OmegaSL1}
\end{figure}

Next, consider $z$ lying on the Stokes curve $\Re(\xi)=0$, which in  \cref{fig:Figzplane} is labeled $\mathsf{A}\mathsf{H}$. The zeros of $\theta_{n}(uz;a)$ in the upper half-plane lie close to this curve, but not exactly on it. On this curve $z$ runs from $z_{1}=-\frac12 \alpha+i \sigma$ to $x_{1}$, where the latter value is the negative root of \cref{eq21} when $\xi=-\frac12 \pi i$. Let $\tau=\tau_{0} = \arg(z_{1})$, so that 
%%%%%%%%%%%%%%%%%%
\begin{equation}
\label{eq91}
\tau_{0}=\frac12 \pi+\arctan\left(\frac{\alpha}{2\sigma}\right).
\end{equation}
%%%%%%%%%%%%%%%%%%
We then parameterise the curve by $z=z(\tau)=r(\tau)e^{i\tau}$ where $\tau_{0} \leq \tau \leq \pi$, and $x_{1} \leq r \leq |z_{1}|=1+\frac12 \alpha$.

Since the turning point $z_{1}$ lies on this curve we use \cref{eq86,eq87} instead of \cref{eq89,eq90}, and in particular the following truncated versions of these expansions
%%%%%%%%%%%%%%%%%%
\begin{equation}
\label{eq92}
\hat{A}_{3}(u,a,z) =
1+\sum\limits_{s=1}^{3}
\frac{\mathrm{A}_{s}(a,z)}{u^{2s}}, \,
\hat{B}_{3}(u,a,z) = \frac{1}{u^{4/3}}
\sum\limits_{s=0}^{3}
\frac{\mathrm{B}_{s}(a,z)}{u^{2s}}.
\end{equation}
%%%%%%%%%%%%%%%%%%

With these, we define $\hat{\Theta}_{3}(u,a,z)$ as the approximation given by the RHS of \cref{eq83} with $A(u,a,z)$ and $B(u,a,z)$ replaced by $\hat{A}_{3}(u,a,z)$ and $\hat{B}_{3}(u,a,z)$, respectively. To investigate the relative error, define 
%%%%%%%%%%%%%%%%%%
\begin{equation}
\label{eq93}
\hat{\Omega}(u,a,\tau)=\log_{10}\left| \frac{\theta_{n}(uz(\tau);a)
-\hat{\Theta}(u,a,z(\tau))}{\theta_{n}(uz(\tau);a)} \right|,
\end{equation}
%%%%%%%%%%%%%%%%%%
where $z(\tau)=r(\tau)e^{i\tau}$. In \cref{fig:OmegaSL} we plot this function for $a=1.2$, $u=20.5$ and $\tau_{0} \leq \tau \leq \pi$, and one observes high accuracy along the curve. Points having cusps are explained as being close to the zeros of $\theta_{n}(uz;a)$ (which is in the denominator of \cref{eq93}). In \cref{fig:OmegaSL1} we show in more detail the graph at points near the zero which is closest to the negative real axis ($\tau \approx \pi$).

\newpage

\section*{Acknowledgment}
Financial support from Ministerio de Ciencia e Innovación pro\-ject PID2021-127252NB-I00 (MCIN/AEI/10.13039/ 501100011033/FEDER, UE) is acknowledged.

\section*{Conflict of interest}
The author has no conflict of interest to declare that is relevant to the content of this article.

\bibliographystyle{siamplain}
\bibliography{biblio}
\end{document}